\newtheorem{theorem}{Theorem}
\newtheorem{corollary}{Corollary}
\newtheorem{definition}{Definition}
\newtheorem{lemma}{Lemma}
\newtheorem{remark}{Remark}
\newtheorem{notation}{Notation}
\begin{document}

\title{Global Exponential Stability for Complex-Valued Recurrent Neural Networks With Asynchronous Time Delays}

\author{Xiwei~Liu,~{\it{Member,~IEEE,}}~
        and~Tianping~Chen,~{\it{Senior~Member,~IEEE}}
\thanks{Xiwei Liu is with Department of Computer Science and
Technology, Tongji University, and with the Key Laboratory of Embedded System and Service Computing,
Ministry of Education, Shanghai 200092, P.~R. China. E-mail:
xwliu@tongji.edu.cn}
\thanks{Corresponding Author Tianping Chen is with the School of Computer Sciences/Mathematical
Sciences, Fudan University, 200433, Shanghai, P.~R. China. E-mail:
tchen@fudan.edu.cn}
\thanks{This work was supported by
the National Science Foundation of China under Grant No. 61203149, 61273211, 61233016, the National Basic Research Program of China (973 Program) under
Grant No. 2010CB328101, ``Chen Guang'' project supported by Shanghai
Municipal Education Commission and Shanghai Education Development
Foundation under Grant No. 11CG22, the Fundamental Research Funds
for the Central Universities, and the Program
for Young Excellent Talents in Tongji University.}
}

\maketitle

\begin{abstract}
\boldmath {\bf In this paper, we investigate the global exponential
stability for complex-valued recurrent neural networks with asynchronous
time delays by decomposing complex-valued networks to real and imaginary
parts and construct an equivalent real-valued system. The network model is described by a continuous-time equation. There are two main
differences of this paper with previous works: (1), time delays can be
asynchronous, i.e., delays between different nodes are different, which makes our model more general; (2), we prove the
exponential convergence directly, while the existence and uniqueness of the
equilibrium point is just a direct consequence of the exponential
convergence. By using three generalized norms, we present some sufficient
conditions for the uniqueness and global exponential stability of the
equilibrium point for delayed complex-valued neural networks. These
conditions in our results are less restrictive because of our consideration
of the excitatory and inhibitory effects between neurons, so previous works
of other researchers can be extended. Finally, some numerical simulations are given
to demonstrate the correctness of our obtained results.}
\end{abstract}

\begin{IEEEkeywords}
\boldmath {\bf Asynchronous, complex-valued, global exponential stability,
recurrent neural networks, time delays.}
\end{IEEEkeywords}

\section{Introduction}
Recurrently connected neural networks, including Hopfield neural
networks\cite{H1982}, Cohen-Grossberg neural networks\cite{CG1983}, and
cellular neural networks \cite{CYa}-\cite{CYb}, have been extensively
studied in past decades and found many applications in different areas,
such as signal and image processing, pattern recognition, optimization
problems, associative memories, and so on. Until now, many criteria about
the stability of equilibrium are obtained in the literature, see
\cite{GH1994}-\cite{ZWL2014} and references therein.

It is natural to generalize the real-valued systems to complex-valued systems \cite{MMZ1996}, which can be used in the nonlinear quantum systems, reaction-advection-diffusion systems, heat equation, petri nets, chaotic systems, etc. Many approaches are also obtained, for example, decomposing complex-valued system to two real-valued systems is applied in some nonlinear systems and regular networks, see \cite{MMge2012}-\cite{ZWL2014b} and references therein. Recently, as an important part of nonlinear complex-valued systems, complex-valued neural network (CVNN) models are proposed as an important part of complex-valued systems, and have
attracted more and more attention from various areas in science and
technology, see \cite{JLZ1996}-\cite{FS2014} and references therein. CVNN
can be regarded as an extension of real-valued recurrent neural networks,
which has complex-valued state, output, connection weight, and activation
functions. For example, they are suited to deal with complex state composed
of amplitude and phase. This is one of the core concepts in physical
systems dealing with electromagnetic, light, ultrasonic, quantum waves, and
so on. Moreover, many applications heavily depend on the dynamical
behaviors of networks. Therefore, analysis of these dynamical behaviors is
a necessary step toward practical design of these neural networks. In
\cite{BRS2011}, a CVNN model on time scales is studied based on delta
differential operator. In \cite{{ZZ2009}}-\cite{TM2012}, discrete-time
CVNNs are also discussed. Stability of complex-valued impulsive system is
investigated by \cite{FS2013}. Until now, there have been various methods
to study the stability of CVNNs, such as the Lyapunov functional method
\cite{OIO2011}, the synthesis method \cite{LFL2009}, and so on.

In particular, in hardware implementation, time delays inevitably occur due
to the finite switching speed of the amplifiers and communication time.
What's more, to process moving images, one must introduce time delays in
the signals transmitted among the cells.
Furthermore, time delay is
frequently a source of oscillation and instability in neural networks.
Therefore, neural networks with time delays have much more complicated
dynamics due to the incorporation of delays, and the stability of delayed
neural networks has become a hot topic of great theoretical and practical
importance, and a great deal of significant results have been reported in
the literature. For example, \cite{ZWLL2014} investigates the stability and
synchronization for discrete-time CVNNs with time-varying delays;
\cite{FS2014b} studies the stability of complex-valued impulsive system
with delay. The global exponential and asymptotical stability of CVNNs with
time-delays is studied by \cite{HW2012} with two assumptions of activation
functions, while \cite{ZLC2014} and \cite{FS2014} point out the mistakes in
the proof of \cite{HW2012} and give some new conditions and  criteria to
ensure the existence, uniqueness, and globally asymptotical stability of
the equilibrium point of CVNNs.

In practice, the interconnections are generally asynchronous, that is to say, the inevitable time delays between different nodes are generally different. For example, in order to model vehicular traffic flow \cite{He2001}-\cite{BI2003}, the reaction delays of drivers should be considered, and for different drivers, the reaction delays are different depending on physical
conditions, drivers' cognitive
and physiological states, etc. Moreover, in the load balancing problem \cite{CTGABHJ2005}, for a computing network consisting of $n$ computers (also called nodes), except for the different communication delays, the task-transfer delays $\tau_{jk}$ also should be considered, which depends on the number of tasks to be transferred from node $k$ to node $j$. More related examples can be found in \cite{SNAMG2011} and references therein. Hence, based on above discussions, it is
necessary to study the dynamical behavior of neural networks with asynchronous time (varying)
delays. To our best knowledge, there have been few works to report the
stability of CVNNs with asynchronous time delays, see \cite{ZLH2014},
\cite{XZS2014}. For example, \cite{ZLH2014} focuses on the existence,
uniqueness and global robust stability of equilibrium point for CVNNs with
multiple time-delays and under parameter uncertainties with respect to two
activation functions; while \cite{XZS2014} investigates the dynamical
behaviors of CVNNs with mixed time delays. However, all these works
(\cite{HW2012}-\cite{FS2014}, \cite{ZLH2014}, \cite{XZS2014}) apply the homeomorphism mapping approach
proposed by \cite{FT1995} to prove the existence, uniqueness and global
stability of equilibrium point by two steps: step 1, prove the existence of
equilibrium; step 2, prove its stability.
In \cite{CA2001} and
\cite{C2001}, a direct approach to analyze global and local stability of
networks was first proposed. It was revealed that the finiteness of
trajectory $x(t)$ under some norms, i.e.,
$\int_0^{\infty}\|\dot{x}(t)\|dt<\infty$, is a sufficient condition for the
existence, and global stability of the equilibrium point. This idea was
also used in \cite{Mfa}. In this paper, we will adopt this approach.
Moreover, we give several criteria based on three generalized
$L_{\infty}$ norm, $L_{1}$ norm, $L_{2}$ norm, respectively. In particular,
based on $L_{\infty}$-norm, we can discuss the
networks with time-varying delays.

This paper is organized as follows. In Section \ref{pre}, we give the model
description, decompose the complex-valued differential equations to real
part and imaginary part, and then recast it into an equivalent real-valued
differential system, whose dimension is double that of the original
complex-valued system. Some definitions, lemmas and notations used in the
paper are also given. In Section \ref{main}, we present some criteria for
the uniqueness and global exponential stability of the equilibrium point
for recurrent neural networks models with asynchronous time delays by using
the generalized $\infty$-norm, $1$-norm, and $2$-norm, respectively. Some
comparisons with previous M-matrix results are also presented. In Section
\ref{numerical}, some numerical simulations under constant and time varying-delays are given to demonstrate the
effectiveness of our obtained results. Finally, conclusion is given and some discussions about our future investigation of CVNNs are presented in
Section \ref{conc}.

\section{Preliminaries}\label{pre}
In this section, we give some definitions, lemmas and notations, which will be used throughout the paper.

At first, let us give a definition of asynchronous time delays.
\begin{definition}\label{asy}
(Synchronous and asynchronous time delays) For any node $j$ in a coupled neural network, the synchronous time delay means that at time $t$, node $j$ receives the information from other nodes at the same time $t-\tau_j(t)$; while the asynchronous time delays mean that at time $t$, node $j$ receives the information from other nodes at different times $t-\tau_{jk}(t)$, i.e., for nodes $k_1\ne k_2$, $\tau_{jk_1}(t)$ and $\tau_{jk_2}(t)$ can be different.
\end{definition}

Obviously, the network models of asynchronous time delays have a larger scope than that of synchronous time delays.

In this paper, we will investigate the CVNN with asynchronous time delays as follows:
\begin{align}\label{one}
\dot{z}_j(t)&=-d_jz_j(t)\nonumber\\
&+\sum_{k=1}^na_{jk}f_k(z_k(t))+\sum_{k=1}^nb_{jk}g_k(z_k(t-\tau_{jk}))+u_j,\nonumber\\
&~~~~~~~~~~~~~~~~~~~~~~~~~~~~~~~~~~~~~~~~~~~j=1,\cdots,n
\end{align}
where $z_j\in \mathbb{C}$ is the state of $j$-th neuron, $\mathbb{C}$ is
the set of complex numbers; $d_j>0$ represents the positive rate with which
the $j$-th unit will reset its potential to the resting state in
isolation when disconnected from the network; $f_j(\cdot): \mathbb{C}\rightarrow \mathbb{C}$ and $g_j(\cdot): \mathbb{C}\rightarrow \mathbb{C}$ are complex-valued activation functions; matrices $A=(a_{jk})$ and $B=(b_{jk})$ are complex-valued connection weight matrices without and with time delays; $\tau_{jk}$ are asynchronous constant time delays; $u_j\in \mathbb{C}$ is the $j$-th external input.

\begin{remark}
When $\tau_{jk}=\tau$, system (\ref{one}) becomes the model investigated in
\cite{HW2012}; when activation functions $f_j$ and $g_j$ are real
functions, system (\ref{one}) becomes the model investigated by
\cite{C2001}. Therefore, this model has a larger scope than previous works,
and all the obtained results in the next section can be applied to these
special cases.
\end{remark}

For any complex number $z$, we use $z^R$ and $z^I$ to denote its real and
imaginary part respectively, so $z=z^R+i\cdot z^I$, where $i$ denotes the
imaginary unit, that is $i=\sqrt{-1}$.

Now, we introduce some classes of activation functions.

\begin{definition}\label{impro}
Assume $f_j(z)$ can be decomposed to its real and imaginary part as
$f_j(z)=f_j^R(z^R,z^I)+if_j^I(z^R,z^I)$ where $z=z^R+iz^I$,
$f_j^R(\cdot,\cdot): R^2\rightarrow R$ and $f_j^I(\cdot,\cdot):
R^2\rightarrow R$. Suppose the partial derivatives of $f_j(\cdot,\cdot)$
with respect to $z^R, z^I: \partial{f}_j^R/\partial{z^R},
\partial{f}_j^R/\partial{z^I}, \partial{f}_j^I/\partial{z^R}$, and
$\partial{f}_j^I/\partial{z^I}$ exist. If these partial derivatives are
continuous, positive and bounded, i.e., there exist positive constant
numbers $\lambda_j^{RR}, \lambda_j^{RI}, \lambda_j^{IR}, \lambda_j^{II}$,
such that
\begin{align}\label{h1}
0<\partial{f}_j^R/\partial{z^R}\le \lambda_j^{RR}, &~~ 0<\partial{f}_j^R/\partial{z^I}\le \lambda_j^{RI},\nonumber\\
0<\partial{f}_j^I/\partial{z^R}\le \lambda_j^{IR}, &~~ 0<\partial{f}_j^I/\partial{z^I}\le \lambda_j^{II},
\end{align}
then $f_j(z)$ is said to belong to class $H_1(\lambda_j^{RR}, \lambda_j^{RI}, \lambda_j^{IR}, \lambda_j^{II})$.
\end{definition}
\begin{remark}
If $f_j^R$ and $f^I_j$ are absolutely continuous, then their partial
derivatives exist almost everywhere.
\end{remark}

\begin{definition}\label{lip}
Assume $g_j(z)$ can be decomposed to its real and imaginary part as
$g_j(z)=g_j^R(z^R, z^I)+ig_j^I(z^R, z^I)$, where $z=z^R+iz^I$,
$g_j^R(\cdot,\cdot): R^2\rightarrow R$ and $g_j^I(\cdot,\cdot):
R^2\rightarrow R$. Suppose the partial derivatives of $g_j(\cdot,\cdot)$
with respect to $z^R, z^I: \partial{g}_j^R/\partial{z^R},
\partial{g}_j^R/\partial{z^I}, \partial{g}_j^I/\partial{z^R}$, and
$\partial{g}_j^I/\partial{z^I}$ exist. If these partial derivatives are
continuous and bounded, i.e., there exist positive constant numbers
$\mu_j^{RR}, \mu_j^{RI}, \mu_j^{IR}, \mu_j^{II}$, such that
\begin{align}\label{h2}
|\partial{g}_j^R/\partial{z^R}|\le \mu_j^{RR}, &~~ |\partial{g}_j^R/\partial{z^I}|\le \mu_j^{RI},\nonumber\\
|\partial{g}_j^I/\partial{z^R}|\le \mu_j^{IR}, &~~ |\partial{g}_j^I/\partial{z^I}|\le \mu_j^{II},
\end{align}
then $g_j(z)$ is said to belong to class $H_2(\mu_j^{RR}, \mu_j^{RI}, \mu_j^{IR}, \mu_j^{II})$.
\end{definition}

\begin{remark}
Definition \ref{lip} is the usual assumption for activation functions in
the literature of CVNNs, which can be found in \cite{HW2012},
\cite{ZLH2014}, \cite{XZS2014} and references therein. However, the
activation functions defined in Definition \ref{impro} is more restrictive,
which will be useful when considering the signs of entries in connection
weights, i.e., there is a trade-off between the assumption on activation
functions and obtained final criteria.
\end{remark}

Therefore, by decomposing CVNN (\ref{one}) to real and imaginary parts, we
can get two equivalent real-valued systems:
\begin{align}\label{realf}
\dot{z}_j^R(t)&=-d_jz_j^R(t)\nonumber\\
&+\sum_{k=1}^na_{jk}^Rf_k^R\bigg(z_k^R(t), z_k^I(t)\bigg)
-\sum_{k=1}^na_{jk}^If_k^I\bigg(z_k^R(t), z_k^I(t)\bigg)\nonumber\\
&+\sum_{k=1}^nb_{jk}^Rg_k^R\bigg(z_k^R(t-\tau_{jk}), z_k^I(t-\tau_{jk})\bigg)\nonumber\\
&-\sum_{k=1}^nb_{jk}^Ig_k^I\bigg(z_k^R(t-\tau_{jk}), z_k^I(t-\tau_{jk})\bigg)+u_j^R,
\end{align}
and
\begin{align}\label{imagf}
\dot{z}_j^I(t)&=-d_jz_j^I(t)\nonumber\\
&+\sum_{k=1}^na_{jk}^Rf_k^I\bigg(z_k^R(t), z_k^I(t)\bigg)
+\sum_{k=1}^na_{jk}^If_k^R\bigg(z_k^R(t), z_k^I(t)\bigg)\nonumber\\
&+\sum_{k=1}^nb_{jk}^Rg_k^I\bigg(z_k^R(t-\tau_{jk}), z_k^I(t-\tau_{jk})\bigg)\nonumber\\
&+\sum_{k=1}^nb_{jk}^Ig_k^R\bigg(z_k^R(t-\tau_{jk}), z_k^I(t-\tau_{jk})\bigg)+u_j^I.
\end{align}

\begin{remark}
The method of decomposing the CVNNs into two real-valued networks makes the network dimension grow two times, which may cause more calculations. However, this expansion of dimension can also bring some benefits. For example, the number (or dimension) of equilibria can be doubled, which enlarges the capacity of neural networks. It is a trade-off.
\end{remark}

The following three generalized norms are used throughout the paper.
\begin{definition} (See \cite{C2001})
For any vector $v(t)\in R^{m\times 1}$,
\begin{enumerate}
  \item $\{\xi,\infty\}$-norm. $\|v(t)\|_{\{\xi,\infty\}}=\max_j|\xi_j^{-1} v_j(t)|$, where $\xi_j>0, j=1,\cdots,m$.
  \item $\{\xi,1\}$-norm. $\|v(t)\|_{\{\xi,1\}}=\sum_{j}|\xi_j v_j(t)|$, where $\xi_j>0, j=1,\cdots,m$.
  \item $\{\xi,2\}$-norm. $\|v(t)\|_{\{\xi,2\}}=\{\sum_{j}\xi_j| v_j(t)|^2\}^{1/2}$, where $\xi_j>0, j=1,\cdots,m$.
\end{enumerate}
\end{definition}

\begin{lemma}\label{m-matrix} (See \cite{C2001})
Let $C=(c_{jk})\in R^{m\times m}$ be a nonsingular matrix with $c_{jk}\le 0, j,k=1,\cdots,m, j\ne k$. Then all the following statements are equivalent.
\begin{enumerate}
  \item $C$ is an M-matrix, i.e., all the successive principal minors of $C$ are equivalent.
  \item $C^T$ is an M-matrix, where $C^T$ is the transpose of $C$.
  \item The real part of all eigenvalues are positive.
  \item There exists a vector $\xi=(\xi_1,\cdots,\xi_m)^T$ with all $\xi_j>0, j=1,\cdots,m$ such that $\xi^TC>0$, or $C\xi>0$.
\end{enumerate}
\end{lemma}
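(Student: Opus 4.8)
The plan is to reduce all four conditions to a single scalar inequality about a nonnegative matrix and then invoke the Perron--Frobenius theorem. Since every off-diagonal entry of $C$ is $\le 0$, I would first fix $s\ge\max_j c_{jj}$ and write $C=sI-P$, where $P=sI-C$ has nonnegative off-diagonal entries (from the sign condition on $C$) and nonnegative diagonal entries (from the choice of $s$); hence $P\ge 0$ entrywise. The goal is then to prove that each of (1)--(4) is equivalent to the single inequality $s>\rho(P)$, where $\rho(P)$ denotes the spectral radius of $P$. The pivotal observation is that, by Perron--Frobenius, $\rho(P)$ is itself an eigenvalue of $P$ while also dominating the real part of every eigenvalue of $P$; writing the eigenvalues of $C$ as $s-\mu$ as $\mu$ runs over the spectrum of $P$, this yields the clean identity $s-\rho(P)=\min_i \operatorname{Re}(\lambda_i)$, where $\lambda_i$ are the eigenvalues of $C$. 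In particular the truth of $s>\rho(P)$ does not depend on the admissible choice of $s$.

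Two of the reductions are then short. The identity above gives (3) $\Leftrightarrow s>\rho(P)$ immediately. For (4) with the condition $C\xi>0$, set $D=\operatorname{diag}(\xi_1,\dots,\xi_m)$; the matrix $D^{-1}PD\ge 0$ is similar to $P$, and the inequality $P\xi<s\xi$ (which is just $C\xi>0$ rewritten) says precisely that every row sum of $D^{-1}PD$ is strictly less than $s$, whence $\rho(P)=\rho(D^{-1}PD)<s$. Conversely, when $s>\rho(P)$ the Neumann series $C^{-1}=s^{-1}\sum_{k\ge 0}(P/s)^k$ converges to a nonnegative, nonsingular matrix, so $\xi=C^{-1}\mathbf 1$ is strictly positive and satisfies $C\xi=\mathbf 1>0$, giving (4); applying the same to $C^{T}$ handles the variant $\xi^{T}C>0$. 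Finally (1) $\Leftrightarrow$ (2) is automatic, since $C^{T}=sI-P^{T}$ with $P^{T}\ge 0$ and $\rho(P^{T})=\rho(P)$.

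The remaining and most delicate equivalence is (1) $\Leftrightarrow s>\rho(P)$, tying positivity of all leading principal minors to the Perron root. The forward implication is manageable: each leading principal submatrix has the form $C_k=sI-P_k$ with $P_k$ a principal submatrix of $P$, so $\rho(P_k)\le\rho(P)<s$ by monotonicity of the Perron root under deletion of rows and columns; then $\det C_k=\prod(s-\mu)>0$, because the complex eigenvalues of $P_k$ occur in conjugate pairs contributing factors $|s-\mu|^2>0$, while each real eigenvalue satisfies $\mu\le\rho(P_k)<s$. I expect the reverse implication---positivity of all leading principal minors forcing $s>\rho(P)$---to be the main obstacle, since for a general matrix positive leading minors say nothing about the spectrum and the $Z$-structure is essential here. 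My plan is to run Gaussian elimination and verify that every Schur complement of $C$ is again a $Z$-matrix whose positive pivots are inherited from the positive leading minors, producing a factorization $C=LU$ with $L^{-1},U^{-1}\ge 0$ and hence $C^{-1}\ge 0$; nonnegativity of $C^{-1}$ then forces $s>\rho(P)$ through the Neumann-series characterization. An alternative is a homotopy argument along $t\mapsto\det(tI-P)$, locating the first vanishing of a leading minor, as $t$ decreases from $+\infty$, exactly at $t=\rho(P)$. Assembling these reductions closes the cycle and establishes the equivalence of (1)--(4).
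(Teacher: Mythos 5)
The paper offers no proof of this lemma to compare against: it is quoted (typo included --- ``equivalent'' in item 1 should read ``positive'') from \cite{C2001} as a standard fact about nonsingular M-matrices. Judged on its own, your reduction of all four conditions to the single inequality $s>\rho(P)$ for the splitting $C=sI-P$, $P\ge 0$, is the classical route and is essentially sound: the identity $s-\rho(P)=\min_i\operatorname{Re}\lambda_i(C)$ disposes of (3); the weighted row-sum bound $\rho(P)=\rho(D^{-1}PD)\le\|D^{-1}PD\|_{\infty}<s$ together with the Neumann series $C^{-1}=s^{-1}\sum_{k\ge0}(P/s)^{k}\ge s^{-1}I$ disposes of (4); and the forward half of (1) via monotonicity of the Perron root on principal submatrices works, noting that $s>\rho(P)\ge0$ forces $s>0$ so that every real eigenvalue $\mu$ of $P_k$ gives $s-\mu\ge s-|\mu|>0$. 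In the step you flag as delicate, the Schur-complement plan does go through (a Schur complement of a Z-matrix by a positive pivot is again a Z-matrix, the pivots are ratios of consecutive leading minors, and $L=I-N$, $U=D(I-M)$ with $N,M\ge0$ nilpotent yield $C^{-1}=U^{-1}L^{-1}\ge0$), but your closing sentence contains a genuine misstep: nonnegativity of $C^{-1}$ does \emph{not} follow into $s>\rho(P)$ ``through the Neumann-series characterization'' --- the Neumann series only gives the converse implication. To finish, apply $C^{-1}$ to a nonnegative Perron eigenvector $v$ of $P$ (which exists for any $P\ge0$): from $Cv=(s-\rho(P))v$ one gets $v=(s-\rho(P))C^{-1}v$ with $C^{-1}v\ge0$ and $v\ge0$, $v\ne0$, which is impossible unless $s-\rho(P)>0$. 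With that substitution the cycle of equivalences closes; the homotopy alternative you mention is vaguer and can be dropped.
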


\begin{notation}
For any real scalar $a$, denote $a^{+}=\max\{0,a\}$. For any matrix $C=(c_{jk})\in R^{n\times n}$, denote $|C|=(|c_{jk}|)$. In the following, we denote $n\times n$ matrices $A^R=(a_{jk}^R)$, $A^I=(a_{jk}^I)$, $B^R=(b_{jk}^R)$, $B^I=(b_{jk}^I)$, and \\ $F^{RR}=\mathrm{diag}\{\lambda_1^{RR},\cdots,\lambda_n^{RR}\},$
$F^{RI}=\mathrm{diag}\{\lambda_1^{RI},\cdots,\lambda_n^{RI}\},$
$F^{IR}=\mathrm{diag}\{\lambda_1^{IR},\cdots,\lambda_n^{IR}\},$
$F^{II}=\mathrm{diag}\{\lambda_1^{II},\cdots,\lambda_n^{II}\},$
$G^{RR}=\mathrm{diag}\{\mu_1^{RR},\cdots,\mu_n^{RR}\},$
$G^{RI}=\mathrm{diag}\{\mu_1^{RI},\cdots,\mu_n^{RI}\},$
$G^{IR}=\mathrm{diag}\{\mu_1^{IR},\cdots,\mu_n^{IR}\},$~~
$G^{II}=\mathrm{diag}\{\mu_1^{II},\cdots,\mu_n^{II}\}$.
\end{notation}

\begin{notation}
For any two non-negative functions $f(t), g(t): (-\infty, +\infty)\rightarrow [0,+\infty)$, $f(t)=O(g(t))$ means that for all $t\in R$, there is a positive constant scalar $c$ such that $f(t)\le c\cdot g(t)$. For any symmetric matric $A$, $\lambda_{max}(A)$ means its largest eigenvalue. A $n$-dimensional vector $p=(p_1,\cdots,p_n)^T$ is called a positive vector, if its all elements are positive, i.e.,  $p_i>0, i=1,\cdots,n$.
\end{notation}

\section{Main Results}\label{main}
In this section, we prove some criteria for the uniqueness and global exponential stability of the equilibrium.

\subsection{Criteria with $\{\xi,\infty\}$-norm}
\begin{theorem}\label{thinfty}
For dynamical systems (\ref{realf}) and (\ref{imagf}), suppose the activation function $f_j(z)$ belongs to class $H_1(\lambda_j^{RR}, \lambda_j^{RI}, \lambda_j^{IR}, \lambda_j^{II})$ and $g_j(z)$ belongs to class $H_2(\mu_j^{RR}, \mu_j^{RI}, \mu_j^{IR}, \mu_j^{II})$, $j=1,\cdots,n$. If there exists a positive vector $\xi=(\xi_1,\cdots,\xi_{n},\phi_1,\cdots,\phi_n)^T>0$ and $\epsilon>0$, such that, for $j=1,\cdots,n$,
\begin{align}\label{t1}
&T1(j)=\xi_{j}\bigg(-d_{j}+\epsilon+\{a_{jj}^R\}^{+}\cdot\lambda_{j}^{RR}+\{-a_{jj}^I\}^{+}\cdot\lambda_{j}^{IR}\bigg)
\nonumber\\
&+\sum_{k=1,k\ne j}^n\xi_{k}|a_{jk}^R|\lambda_{k}^{RR}
+\sum_{k=1}^n\phi_{k}|a_{jk}^R|\lambda_k^{RI}+\sum_{k=1,k\ne j}^n\xi_{k}|a_{jk}^I|\lambda_k^{IR}\nonumber\\
&+\sum_{k=1}^n\phi_{k}|a_{jk}^I|\lambda_k^{II}
+\bigg(\sum\limits_{k=1}^n\xi_{k}|b_{jk}^R|\mu_k^{RR}+\sum\limits_{k=1}^n\phi_{k}|b_{jk}^R|\mu_k^{RI}\nonumber\\
&+\sum\limits_{k=1}^n\xi_{k}|b_{jk}^I|\mu_k^{IR}+\sum\limits_{k=1}^n\phi_{k}|b_{jk}^I|\mu_k^{II}\bigg)e^{\epsilon \tau_{jk}}\le 0,
\end{align}
and
\begin{align}\label{t2}
&T2(j)=\phi_{j}\bigg(-d_{j}+\epsilon+\{a_{jj}^R\}^{+}\cdot\lambda_{j}^{II}+\{a_{jj}^I\}^{+}\cdot\lambda_{j}^{RI}\bigg)
\nonumber\\
&+\sum_{k=1}^n\xi_{k}|a_{jk}^R|\lambda_{k}^{IR}+\sum\limits_{k=1,k\ne j}^n\phi_{k}|a_{jk}^R|\lambda_k^{II}
+\sum_{k=1}^n\xi_{k}|a_{jk}^I|\lambda_k^{RR}\nonumber\\
&+\sum\limits_{k=1, k\ne j}^n\phi_{k}|a_{jk}^I|\lambda_k^{RI}
+\bigg(\sum\limits_{k=1}^n\xi_{k}|b_{jk}^R|\mu_k^{IR}+\sum\limits_{k=1}^n\phi_{k}|b_{jk}^R|\mu_k^{II}\nonumber\\
&+\sum\limits_{k=1}^n\xi_{k}|b_{jk}^I|\mu_k^{RR}+\sum\limits_{k=1}^n\phi_{k}|b_{jk}^I|\mu_k^{RI}\bigg)e^{\epsilon \tau_{jk}}\le 0,
\end{align}
then dynamical systems (\ref{realf}) and (\ref{imagf}) have a unique equilibrium $\overline{Z}^R=(\overline{z}_1^R,\cdots, \overline{z}_n^R)^T$ and  $\overline{Z}^I=(\overline{z}_1^I,\cdots,\overline{z}_n^I)^T$, respectively. Moreover, for any solution
\begin{align}\label{z}
Z(t)=(z_1^R(t),\cdots,z^R_n(t), z_1^I(t),\cdots,z_n^I(t))^T,
\end{align}
there hold
\begin{align}
\|\dot{Z}(t)\|_{\{\xi,\infty\}}=O(e^{-\epsilon t}),\label{e1}\\
\|Z(t)-({\overline{Z}^R}^T,{\overline{Z}^I}^T)^T\|_{\{\xi,\infty\}}=O(e^{-\epsilon t}).\label{e2}
\end{align}
\end{theorem}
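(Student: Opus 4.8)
The plan is to follow the \emph{direct approach} of \cite{C2001}: instead of first proving the existence of an equilibrium and then its stability, I would establish the exponential decay estimate (\ref{e1}) for $\dot{Z}(t)$ first, and then read off existence, uniqueness, and (\ref{e2}) as consequences. The starting point is to differentiate the real-valued systems (\ref{realf}) and (\ref{imagf}) with respect to $t$. Since $f_k$ and $g_k$ have continuous and bounded partial derivatives (classes $H_1$ and $H_2$), the chain rule gives, for each nonlinear term, expressions such as $\frac{d}{dt}f_k^R(z_k^R(t),z_k^I(t)) = (\partial f_k^R/\partial z^R)\dot{z}_k^R(t) + (\partial f_k^R/\partial z^I)\dot{z}_k^I(t)$, so that $\dot{z}_j^R$ and $\dot{z}_j^I$ satisfy a linear, time-varying, delayed differential system whose coefficients are exactly these partial derivatives, bounded in magnitude by the constants in (\ref{h1}) and (\ref{h2}).

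The core step is a weighted-maximum (Dini derivative) comparison argument. I would fix a solution and set $\Psi(t) = \sup_{-\tau\le s\le t} e^{\epsilon s}\|\dot{Z}(s)\|_{\{\xi,\infty\}}$, where $\tau = \max_{j,k}\tau_{jk}$, the weights are $\xi_1,\dots,\xi_n$ for the real components and $\phi_1,\dots,\phi_n$ for the imaginary ones, and $\dot Z$ is assumed bounded on the initial interval. The goal is to show that the nondecreasing function $\Psi$ cannot strictly increase, hence $\Psi(t)\equiv\Psi(0)$. At a time $t$ where the running supremum is attained, say by the real component $j$, I would estimate $D^+\big[e^{\epsilon t}\xi_j^{-1}|\dot{z}_j^R(t)|\big]$ via $D^+|\dot{z}_j^R| \le \mathrm{sign}(\dot{z}_j^R)\,\frac{d}{dt}\dot{z}_j^R$ together with the linear system from the previous step. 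Three bookkeeping points make the coefficients in (\ref{t1}) appear: (i) the two \emph{self-coupling} terms $a_{jj}^R\,\partial f_j^R/\partial z^R$ and $-a_{jj}^I\,\partial f_j^I/\partial z^R$ retain their sign after multiplication by $\mathrm{sign}(\dot{z}_j^R)\,\dot{z}_j^R = |\dot{z}_j^R|$, and since $\partial f_j^R/\partial z^R$ and $\partial f_j^I/\partial z^R$ are positive and bounded by $\lambda_j^{RR}$ and $\lambda_j^{IR}$, they are bounded above by $\{a_{jj}^R\}^{+}\lambda_j^{RR}$ and $\{-a_{jj}^I\}^{+}\lambda_j^{IR}$ respectively -- this is precisely the excitatory/inhibitory refinement that replaces $|a_{jj}|$ by its positive part; (ii) the off-diagonal and cross terms are bounded by absolute values, producing summands of the $|a_{jk}^R|\lambda_k^{RR}$ type; (iii) the delayed terms are evaluated at $t-\tau_{jk}$, and the bound $\xi_k^{-1}e^{\epsilon(t-\tau_{jk})}|\dot{z}_k^R(t-\tau_{jk})| \le \Psi(t)$ produces the factor $e^{\epsilon\tau_{jk}}$. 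Collecting everything and factoring out $\xi_j^{-1}\Psi(t)$ yields $D^+\big[e^{\epsilon t}\xi_j^{-1}|\dot{z}_j^R(t)|\big] \le \xi_j^{-1}\Psi(t)\,T1(j) \le 0$; the imaginary case is analogous and invokes $T2(j)\le 0$. Hence $\Psi$ is constant and $\|\dot{Z}(t)\|_{\{\xi,\infty\}}\le \Psi(0)e^{-\epsilon t}$, which is (\ref{e1}).

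With (\ref{e1}) in hand the remaining conclusions are routine. Integrating $|\dot{z}_j(t)| = O(e^{-\epsilon t})$ shows $\int_0^\infty\|\dot{Z}(t)\|\,dt<\infty$, so $Z(t)$ converges to a limit $\bar{Z} = ({\bar{Z}^R}^T,{\bar{Z}^I}^T)^T$; since $\dot{Z}(t)\to 0$, this limit is an equilibrium, giving existence. Writing $z_j(t)-\bar{z}_j = -\int_t^\infty \dot{z}_j(s)\,ds$ and bounding by $\int_t^\infty |\dot{z}_j(s)|\,ds = O(e^{-\epsilon t})$ gives (\ref{e2}). For uniqueness I would apply the same peak-index argument to the difference of two equilibria, viewed as constant solutions: the mean value theorem linearizes $f_k$ and $g_k$ along the difference and reproduces the brackets $T1(j)$, $T2(j)$, so that at the weighted-maximum index the strict margin $\epsilon>0$ forces the difference to vanish.

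I expect the main obstacle to be the second step -- the Dini-derivative estimate -- and, within it, the careful separation of the diagonal self-coupling terms (to obtain the positive-part coefficients $\{a_{jj}^R\}^{+}$ and $\{-a_{jj}^I\}^{+}$ rather than $|a_{jj}|$) together with the correct propagation of the delay factor $e^{\epsilon\tau_{jk}}$ through the running supremum $\Psi$. A minor technical point to address is that $|\dot{z}_j^R(t)|$ is only absolutely continuous, so the estimate must be phrased with upper right Dini derivatives and interpreted for almost every $t$, which is enough for the comparison conclusion.
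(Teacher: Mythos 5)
Your proposal is correct and follows essentially the same route as the paper's Appendix A: the paper likewise sets $x_j(t)=e^{\epsilon t}\dot z_j^R(t)$, $y_j(t)=e^{\epsilon t}\dot z_j^I(t)$, runs the peak-index estimate on the weighted $\{\xi,\infty\}$-norm with the positive-part treatment of the diagonal terms and the $e^{\epsilon\tau_{jk}}$ factor absorbed by the running supremum $M(t)=\sup_{t-\tau\le s\le t}\|X(s)\|_{\{\xi,\infty\}}$, and then obtains existence, uniqueness and (\ref{e2}) by integrating $\dot Z$ and applying the Cauchy convergence principle. The only cosmetic difference is your use of the supremum over the whole history rather than the sliding window, which is an equivalent formulation of the same monotonicity argument.
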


Its proof can be found in Appendix A.

\begin{corollary}\label{cor1}
For dynamical systems (\ref{realf}) and (\ref{imagf}), suppose the activation function $f_j(z)$ belongs to class $H_1(\lambda_j^{RR}, \lambda_j^{RI}, \lambda_j^{IR}, \lambda_j^{II})$ and $g_j(z)$ belongs to class $H_2(\mu_j^{RR}, \mu_j^{RI}, \mu_j^{IR}, \mu_j^{II})$, $j=1,\cdots,n$. If there exists a positive vector $\xi=(\xi_1,\cdots,\xi_{n},\phi_1,\cdots,\phi_n)^T>0$, such that, for $j=1,\cdots,n$,
\begin{align}\label{t3}
&T3(j)=\xi_{j}\bigg(-d_{j}+\{a_{jj}^R\}^{+}\cdot\lambda_{j}^{RR}+\{-a_{jj}^I\}^{+}\cdot\lambda_{j}^{IR}\bigg)
\nonumber\\
&+\sum_{k=1,k\ne j}^n\xi_{k}|a_{jk}^R|\lambda_{k}^{RR}
+\sum_{k=1}^n\phi_{k}|a_{jk}^R|\lambda_k^{RI}+\sum_{k=1,k\ne j}^n\xi_{k}|a_{jk}^I|\lambda_k^{IR}\nonumber\\
&+\sum_{k=1}^n\phi_{k}|a_{jk}^I|\lambda_k^{II}
+\sum\limits_{k=1}^n\xi_{k}|b_{jk}^R|\mu_k^{RR}+\sum\limits_{k=1}^n\phi_{k}|b_{jk}^R|\mu_k^{RI}\nonumber\\
&+\sum\limits_{k=1}^n\xi_{k}|b_{jk}^I|\mu_k^{IR}+\sum\limits_{k=1}^n\phi_{k}|b_{jk}^I|\mu_k^{II} < 0,
\end{align}
\begin{align}\label{t4}
&T4(j)=\phi_{j}\bigg(-d_{j}+\{a_{jj}^R\}^{+}\cdot\lambda_{j}^{II}+\{a_{jj}^I\}^{+}\cdot\lambda_{j}^{RI}\bigg)
\nonumber\\
&+\sum_{k=1}^n\xi_{k}|a_{jk}^R|\lambda_{k}^{IR}+\sum\limits_{k=1,k\ne j}^n\phi_{k}|a_{jk}^R|\lambda_k^{II}
+\sum_{k=1}^n\xi_{k}|a_{jk}^I|\lambda_k^{RR}\nonumber\\
&+\sum\limits_{k=1, k\ne j}^n\phi_{k}|a_{jk}^I|\lambda_k^{RI}
+\sum\limits_{k=1}^n\xi_{k}|b_{jk}^R|\mu_k^{IR}+\sum\limits_{k=1}^n\phi_{k}|b_{jk}^R|\mu_k^{II}\nonumber\\
&+\sum\limits_{k=1}^n\xi_{k}|b_{jk}^I|\mu_k^{RR}+\sum\limits_{k=1}^n\phi_{k}|b_{jk}^I|\mu_k^{RI}< 0,
\end{align}
then any solution of systems (\ref{realf}) and (\ref{imagf}) respectively converges to a unique equilibrium
exponentially.
\end{corollary}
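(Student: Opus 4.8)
The plan is to derive the corollary directly from Theorem \ref{thinfty} by a continuity argument in the parameter $\epsilon$. The key observation is that the quantities $T3(j)$ and $T4(j)$ are precisely $T1(j)$ and $T2(j)$ evaluated at $\epsilon=0$: setting $\epsilon=0$ removes the additive $\epsilon$ inside the diagonal parentheses and replaces each factor $e^{\epsilon\tau_{jk}}$ by $1$, so that $T1(j)\big|_{\epsilon=0}=T3(j)$ and $T2(j)\big|_{\epsilon=0}=T4(j)$ for every $j$.

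First I would regard $T1(j)$ and $T2(j)$ as functions of the single real variable $\epsilon\ge 0$, with the positive vector $\xi=(\xi_1,\ldots,\xi_n,\phi_1,\ldots,\phi_n)^T$ fixed to be the one furnished by the hypotheses (\ref{t3})--(\ref{t4}). Each of these functions is continuous (indeed real-analytic) in $\epsilon$, since $\epsilon$ enters only linearly through the diagonal term and through the exponentials $e^{\epsilon\tau_{jk}}$, which are continuous in $\epsilon$. Hence $T1(j)$ and $T2(j)$ depend continuously on $\epsilon$ on $[0,\infty)$.

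Next, by the strict inequalities $T3(j)<0$ and $T4(j)<0$, we have $T1(j)\big|_{\epsilon=0}<0$ and $T2(j)\big|_{\epsilon=0}<0$ for each of the finitely many indices $j=1,\ldots,n$. By continuity, for each $j$ there is $\epsilon_j>0$ with $T1(j)\le 0$ and $T2(j)\le 0$ whenever $0<\epsilon\le\epsilon_j$; taking $\epsilon=\min_{1\le j\le n}\epsilon_j>0$ yields a single positive $\epsilon$ for which the conditions (\ref{t1}) and (\ref{t2}) of Theorem \ref{thinfty} hold simultaneously for all $j$. Applying Theorem \ref{thinfty} with this $\epsilon$ and the same $\xi$ then gives the existence and uniqueness of the equilibrium together with the exponential decay estimates (\ref{e1})--(\ref{e2}), which is exactly the asserted exponential convergence to a unique equilibrium.

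The argument is essentially immediate once the reduction $T1|_{\epsilon=0}=T3$, $T2|_{\epsilon=0}=T4$ is recognized; the only point requiring mild care is that the threshold $\epsilon$ must be chosen uniformly over all $j$, which is possible because $n$ is finite and the inequalities in (\ref{t3})--(\ref{t4}) are strict. Had the corollary's inequalities been non-strict, no such positive $\epsilon$ could in general be extracted, so the strictness is precisely what supplies the exponential rate.
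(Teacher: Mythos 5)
Your proposal is correct and is essentially the paper's own argument: the authors likewise observe that the strict inequalities (\ref{t3})--(\ref{t4}) allow one to pick a sufficiently small $\epsilon>0$ making (\ref{t1})--(\ref{t2}) hold, and then invoke Theorem \ref{thinfty}. You merely spell out the continuity-in-$\epsilon$ and uniform-choice-over-finitely-many-$j$ details that the paper leaves implicit.
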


If conditions (\ref{t3}) and (\ref{t4}) hold, then we can find a sufficient small constant $\epsilon>0$, such that inequalities (\ref{t1}) and (\ref{t2}) hold. Therefore, this corollary is a direct consequence of Thm. \ref{thinfty}.

\begin{corollary}\label{cor2}
For dynamical systems (\ref{realf}) and (\ref{imagf}), suppose the activation function $f_j(z)$ belongs to class $H_2(\lambda_j^{RR}, \lambda_j^{RI}, \lambda_j^{IR}, \lambda_j^{II})$ and $g_j(z)$ belongs to class $H_2(\mu_j^{RR}, \mu_j^{RI}, \mu_j^{IR}, \mu_j^{II})$, $j=1,\cdots,n$. If there exists a positive vector $\xi=(\xi_1,\cdots,\xi_{n},\phi_1,\cdots,\phi_n)^T>0$, such that, for $j=1,\cdots,n$,
\begin{align}\label{t5}
&T5(j)=-\xi_{j}d_{j}
+\sum_{k=1}^n\xi_{k}|a_{jk}^R|\lambda_{k}^{RR}+\sum_{k=1}^n\phi_{k}|a_{jk}^R|\lambda_k^{RI}\nonumber\\
&+\sum_{k=1}^n\xi_{k}|a_{jk}^I|\lambda_k^{IR}
+\sum_{k=1}^n\phi_{k}|a_{jk}^I|\lambda_k^{II}
+\sum\limits_{k=1}^n\xi_{k}|b_{jk}^R|\mu_k^{RR}\nonumber\\
&+\sum\limits_{k=1}^n\phi_{k}|b_{jk}^R|\mu_k^{RI}+\sum\limits_{k=1}^n\xi_{k}|b_{jk}^I|\mu_k^{IR}+\sum\limits_{k=1}^n\phi_{k}|b_{jk}^I|\mu_k^{II} < 0,
\end{align}
\begin{align}\label{t6}
&T6(j)=-\phi_{j}d_{j}+\sum_{k=1}^n\xi_{k}|a_{j,k}^R|\lambda_{k}^{IR}
+\sum\limits_{k=1}^n\phi_{k}|a_{jk}^R|\lambda_k^{II}\nonumber\\
&+\sum_{k=1}^n\xi_{k}|a_{jk}^I|\lambda_k^{RR}
+\sum\limits_{k=1}^n\phi_{k}|a_{jk}^I|\lambda_k^{RI}+\sum\limits_{k=1}^n\xi_{k}|b_{jk}^R|\mu_k^{IR}\nonumber\\
&+\sum\limits_{k=1}^n\phi_{k}|b_{jk}^R|\mu_k^{II}
+\sum\limits_{k=1}^n\xi_{k}|b_{jk}^I|\mu_k^{RR}+\sum\limits_{k=1}^n\phi_{k}|b_{jk}^I|\mu_k^{RI} <0,
\end{align}
then any solution of systems (\ref{realf}) and (\ref{imagf}) respectively converges to a unique equilibrium exponentially.
\end{corollary}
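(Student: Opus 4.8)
The plan is to obtain Corollary~\ref{cor2} from an $\epsilon$-enhanced version, exactly as Corollary~\ref{cor1} is obtained from Theorem~\ref{thinfty}. Corollary~\ref{cor2} is not a literal special case of Theorem~\ref{thinfty}, because here $f_j$ is assumed only to lie in $H_2$ and hence its partial derivatives need not be positive; the sign-exploiting diagonal terms $\{a_{jj}^R\}^{+}\lambda_j^{RR}$ and $\{-a_{jj}^I\}^{+}\lambda_j^{IR}$ of~(\ref{t1}) must therefore be weakened to $|a_{jj}^R|\lambda_j^{RR}$ and $|a_{jj}^I|\lambda_j^{IR}$, which is precisely why in~(\ref{t5})--(\ref{t6}) the diagonal entries are folded into the full sums $\sum_{k=1}^{n}$. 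I would first record the $H_2$-analogue of Theorem~\ref{thinfty}: let $T5'(j)$ and $T6'(j)$ be the left-hand sides of~(\ref{t5}) and~(\ref{t6}) with $-d_j$ replaced by $-d_j+\epsilon$ and with each delayed ($b$-)sum weighted by $e^{\epsilon\tau_{jk}}$ as in~(\ref{t1})--(\ref{t2}); then $T5'(j)\le 0$ and $T6'(j)\le 0$ for all $j$ yield $\|\dot{Z}(t)\|_{\{\xi,\infty\}}=O(e^{-\epsilon t})$. Since~(\ref{t5})--(\ref{t6}) are strict, continuity in $\epsilon$ furnishes a small $\epsilon>0$ for which $T5'(j),T6'(j)\le 0$, and the corollary follows.

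To prove this $\epsilon$-analogue I would use the direct approach of~\cite{C2001} underlying Appendix~A. Differentiate~(\ref{realf}) and~(\ref{imagf}) and set $w_j^R=\dot{z}_j^R$, $w_j^I=\dot{z}_j^I$; the chain rule turns each nonlinearity into a partial derivative evaluated along the trajectory times a component of $w$, e.g. $\tfrac{d}{dt}f_k^R=(\partial f_k^R/\partial z^R)\,w_k^R+(\partial f_k^R/\partial z^I)\,w_k^I$. Because $f_k,g_k\in H_2$, every such factor is bounded in absolute value by the corresponding $\lambda$ or $\mu$, with no information on its sign. I would then run the first-crossing argument for the weighted norm $\|w(t)\|_{\{\xi,\infty\}}=\max_j\{\xi_j^{-1}|w_j^R(t)|,\phi_j^{-1}|w_j^I(t)|\}$: let $t^{\ast}$ be the earliest time at which $e^{\epsilon t}\|w(t)\|_{\{\xi,\infty\}}$ would leave a prescribed envelope, and suppose the maximum there is attained at a real index $j$. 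Estimating the upper-right Dini derivative of $\xi_j^{-1}e^{\epsilon t}|w_j^R(t)|$ at $t^{\ast}$, bounding the non-delayed components by the envelope and the delayed ones by $e^{\epsilon\tau_{jk}}$ times the envelope, reproduces $T5'(j)$ on the right-hand side; since $T5'(j)\le 0$ the envelope cannot be left, a contradiction. An index in the imaginary block is handled symmetrically by $T6'(j)$.

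The only substantive departure from Appendix~A is the diagonal self-feedback. Under $H_1$ one knows $\partial f_j^R/\partial z^R>0$, so at the extremal index $a_{jj}^R(\partial f_j^R/\partial z^R)w_j^R\le\{a_{jj}^R\}^{+}\lambda_j^{RR}|w_j^R|$; under $H_2$ the derivative may be negative, and I can only bound $|a_{jj}^R(\partial f_j^R/\partial z^R)w_j^R|\le|a_{jj}^R|\lambda_j^{RR}|w_j^R|$, absorbing the diagonal into the sums $\sum_{k=1}^{n}$ of~(\ref{t5})--(\ref{t6}). With $\|\dot{Z}(t)\|_{\{\xi,\infty\}}=O(e^{-\epsilon t})$ established, this bound is integrable, so $\int_0^{\infty}\|\dot{Z}(t)\|\,dt<\infty$, $Z(t)$ converges, and its limit is an equilibrium. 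Running the identical first-crossing estimate on the difference $Z(t)-Y(t)$ of any two solutions (the mean value theorem supplies the same $\lambda,\mu$ bounds for $f_k^R(z)-f_k^R(y)$, etc.) gives $\|Z(t)-Y(t)\|_{\{\xi,\infty\}}=O(e^{-\epsilon t})$, which simultaneously forces uniqueness of the equilibrium and the global exponential attraction asserted.

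I expect the delay bookkeeping to be the main obstacle. The factors $e^{\epsilon\tau_{jk}}$ are exactly what converts a bound on the current envelope into a bound on its value one delay $\tau_{jk}$ in the past, and the delicate point is to make the resulting estimate close against the strict margin in~(\ref{t5})--(\ref{t6}); this is also what fixes how large $\epsilon$ may be taken. By contrast, replacing $\{a_{jj}^R\}^{+}$ by $|a_{jj}^R|$ once the positivity in $H_1$ is dropped is entirely routine.
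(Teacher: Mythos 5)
Your proposal is correct, and the underlying engine is the same as the paper's: the direct method of Appendix~A (differentiate the system, weight by $e^{\epsilon t}$, run the maximum/first-crossing argument on $\|X(t)\|_{\{\xi,\infty\}}$ with the delay terms absorbed via $e^{\epsilon\tau_{jk}}$, then integrate $\|\dot Z\|$ to get existence, uniqueness and exponential attraction of the equilibrium). The difference is one of care rather than of route. The paper's entire official proof of Corollary~\ref{cor2} is the sentence ``This result is a direct consequence of Corollary~\ref{cor1},'' which, read literally, does not typecheck: Corollary~\ref{cor1} assumes $f_j\in H_1$, whereas Corollary~\ref{cor2} assumes only $f_j\in H_2$, so the sign-exploiting diagonal bounds $\{a_{jj}^R\}^{+}\lambda_j^{RR}$ and $\{-a_{jj}^I\}^{+}\lambda_j^{IR}$ used in the proof of Theorem~\ref{thinfty} are no longer available. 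You identify exactly this point and supply the missing step --- replace the diagonal estimate by $|a_{jj}^R|\lambda_j^{RR}$, $|a_{jj}^I|\lambda_j^{IR}$ and fold it into the full sums, which is precisely how $T5,T6$ differ from $T3,T4$ --- and then rerun the Theorem~\ref{thinfty} argument verbatim. Your uniqueness step (applying the same estimate to the difference of two solutions via the mean value theorem) is a slight variant of the paper's (which integrates $\dot Z$ from $t$ to $\infty$), but both are standard and both work. In short, your proof is what the paper's one-line citation is silently relying on.
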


This result is a direct consequence of Corollary \ref{cor1}.

\begin{remark}\label{ws}
Theorem 1 can be generalized to the system with time-varying  delays
\begin{align}\label{onea}
\dot{z}_j(t)&=-d_jz_j(t)\nonumber\\
&+\sum_{k=1}^na_{jk}f_k(z_k(t))+\sum_{k=1}^nb_{jk}g_k(z_k(t-\tau_{jk}(t)))+u_j,\nonumber\\
&~~~~~~~~~~~~~~~~~~~~~~~~~~~~~~~~~~~~~~~~~~~j=1,\cdots,n
\end{align}
where $\tau_{jk}(t)$ can be bounded or unbounded. In fact, by Theorem 1, system
(\ref{onea}) has an equilibrium  $\overline{Z}=(\overline{z}_1,\cdots, \overline{z}_n)^T$, and
\begin{align*}
\frac{d(z_j(t)-\overline{z}_j)}{dt}&=-d_j(z_j(t)-\overline{z}_j)
+\sum_{k=1}^na_{jk}(f_k(z_k(t))-f_k(\overline{z}_k))\nonumber\\
&+\sum_{k=1}^nb_{jk}(g_k(z_k(t-\tau_{jk}(t)))-g_k(\overline{z}_k)).
\end{align*}
Replacing $e^{\epsilon t}\dot{z}(t)$ by $e^{\epsilon t}(z(t)-\overline{Z})$
in the proof of Theorem 1 and with the similar approach, we can prove that under the conditions (\ref{t1}) and (\ref{t2}), system (\ref{onea}) has a unique equilibrium, which is globally $\mu$ stable (for the concept of $\mu$ stability first proposed in \cite{CW2007} and details, readers can refer to \cite{CW2007,LLC}).
\end{remark}

\subsection{Criteria with $\{\xi,1\}$-norm}

\begin{theorem}\label{th1}
For dynamical systems (\ref{realf}) and (\ref{imagf}), suppose the activation function $f_j(z)$ belongs to class $H_1(\lambda_j^{RR}, \lambda_j^{RI}, \lambda_j^{IR}, \lambda_j^{II})$ and $g_j(z)$ belongs to class $H_2(\mu_j^{RR}, \mu_j^{RI}, \mu_j^{IR}, \mu_j^{II})$, $j=1,\cdots,n$. If there exists a positive vector $\xi=(\xi_1,\cdots,\xi_{n},\phi_1,\cdots,\phi_n)^T>0$ and $\epsilon>0$, such that, for $k=1,\cdots,n$,
\begin{align*}
T7(k)&=\xi_k(-d_k+\epsilon)\nonumber\\
&+\bigg[\xi_ka_{kk}^R+\sum_{j=1,j\ne k}^n\xi_j|a_{jk}^R|+\sum_{j=1}^n\phi_j|a_{jk}^I|\bigg]^{+}\lambda_k^{RR}\nonumber\\
&+\bigg[-\xi_ka_{kk}^I+\sum_{j=1,j\ne k}^n\xi_j|a_{jk}^I|+\sum_{j=1}^n\phi_j|a_{jk}^R|\bigg]^{+}\lambda_k^{IR}\nonumber\\
&+\sum\limits_{j=1}^n\bigg(\xi_j(|b_{jk}^R|\mu_k^{RR}+|b_{jk}^I|\mu_k^{IR})\nonumber\\
&~~~~~~+\phi_j(|b_{jk}^R|\mu_k^{IR}+|b_{jk}^I|\mu_k^{RR})\bigg)e^{\epsilon \tau_{jk}}\le 0,
\\T8(k)&=\phi_k(-d_k+\epsilon)\nonumber\\
&+\bigg[\phi_ka_{kk}^R+\sum_{j=1,j\ne k}^n\phi_j|a_{jk}^R|+\sum_{j=1}^n\xi_j|a_{jk}^I|\bigg]^{+}\lambda_k^{II}\nonumber\\
&+\bigg[\phi_ka_{kk}^I+\sum_{j=1,j\ne k}^n\phi_j|a_{jk}^I|+\sum_{j=1}^n\xi_j|a_{jk}^R|\bigg]^{+}\lambda_k^{RI}\nonumber\\
&+\sum\limits_{j=1}^n\bigg(\xi_j(|b_{jk}^R|\mu_k^{RI}+|b_{jk}^I|\mu_k^{II})\nonumber\\
&~~~~~~+\phi_j(|b_{jk}^R|\mu_k^{II}+|b_{jk}^I|\mu_k^{RI})\bigg)e^{\epsilon \tau_{jk}}\le 0,
\end{align*}
then dynamical systems (\ref{realf}) and (\ref{imagf}) have a unique equilibrium $\overline{Z}^R=(\overline{z}_1^R,\cdots, \overline{z}_n^R)^T$ and  $\overline{Z}^I=(\overline{z}_1^I,\cdots,\overline{z}_n^I)^T$ respectively. Moreover, for any solution $Z(t)$ defined by (\ref{z}), equations (\ref{e1}) and (\ref{e2}) hold, while the norm is $\{\xi, 1\}$-norm.
\end{theorem}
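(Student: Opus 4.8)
The plan is to follow the direct, convergence-first approach of \cite{C2001}: instead of locating an equilibrium first, I will prove that the weighted velocity $\|\dot{Z}(t)\|_{\{\xi,1\}}$ decays like $e^{-\epsilon t}$, and then recover existence, uniqueness and (\ref{e2}) for free by integration. Since the solution of (\ref{realf})--(\ref{imagf}) is $C^1$ for $t>0$ and the partial derivatives of $f_k,g_k$ are continuous and bounded by the constants defining $H_1,H_2$, I may differentiate both systems in $t$. Writing the coefficients $\partial f_k^R/\partial z^R,\dots$ evaluated along the trajectory as time-varying gains lying in $(0,\lambda_k^{RR}]$ for the $f$-gains and satisfying $|\cdot|\le\mu_k^{\bullet\bullet}$ for the $g$-gains, the velocities $\dot{z}_k^R,\dot{z}_k^I$ solve a linear, time-varying, delayed system whose coefficients are exactly these gains.

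Next I introduce the scaled velocities $w_k^R(t)=e^{\epsilon t}\dot{z}_k^R(t)$ and $w_k^I(t)=e^{\epsilon t}\dot{z}_k^I(t)$, so that $e^{\epsilon t}\dot{z}_k^R(t-\tau_{jk})=e^{\epsilon\tau_{jk}}w_k^R(t-\tau_{jk})$; this is the origin of the factors $e^{\epsilon\tau_{jk}}$ in $T7,T8$. I then consider $V(t)=\sum_k\xi_k|w_k^R(t)|+\sum_k\phi_k|w_k^I(t)|=e^{\epsilon t}\|\dot{Z}(t)\|_{\{\xi,1\}}$ and compute the upper Dini derivative $D^+V(t)=\sum_j\xi_j\,\mathrm{sgn}(w_j^R)\dot{w}_j^R+\sum_j\phi_j\,\mathrm{sgn}(w_j^I)\dot{w}_j^I$ (exact where $w^{\bullet}_j\neq0$, Dini at the zeros). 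The decisive algebraic step — and the reason the $1$-norm criteria are indexed by the source $k$ and summed over $j$, i.e.\ are the ``transpose'' of the $\infty$-norm criteria $T1,T2$ — is to substitute the differentiated equations and re-collect the entire expression by the source index $k$. Keeping the genuine sign $a_{kk}^R$ (resp.\ $-a_{kk}^I$) on the diagonal while bounding $\mathrm{sgn}(w_j^R)a_{jk}^R\le|a_{jk}^R|$ off-diagonal, and dominating each nonnegative-gain-times-bracket by $[\,\cdot\,]^{+}\lambda_k^{RR}$ etc., shows that the instantaneous coefficient of $|w_k^R(t)|$ is at most the first three terms of $T7(k)$, and similarly for $|w_k^I(t)|$ and $T8(k)$.

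The main obstacle is the delayed part. Because the delays are asynchronous, for a fixed $j$ the terms $|w_k^R(t-\tau_{jk})|$ are evaluated at different instants, so they cannot be absorbed into $V$ at a single time the way the $\infty$-norm proof absorbs every past component into the scalar maximum. I will overcome this by augmenting $V$ with a Lyapunov--Krasovskii integral, one term per delay,
\begin{align*}
\mathcal{V}(t)&=V(t)+\sum_{k,j}Q_R(k,j)\!\int_{t-\tau_{jk}}^{t}\!|w_k^R(s)|\,ds+\sum_{k,j}Q_I(k,j)\!\int_{t-\tau_{jk}}^{t}\!|w_k^I(s)|\,ds,
\end{align*}
where $Q_R(k,j),Q_I(k,j)\ge0$ are precisely the delayed coefficients (including the $e^{\epsilon\tau_{jk}}$ factors) that make up the last terms of $T7(k),T8(k)$. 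Differentiating each integral produces $+\,Q_R(k,j)|w_k^R(t)|$ and $-\,Q_R(k,j)|w_k^R(t-\tau_{jk})|$; the latter cancels the delayed contribution in $D^+V$, while the former converts it to an instantaneous term. Summing the instantaneous coefficients gives exactly $D^+\mathcal{V}(t)\le\sum_k T7(k)\,|w_k^R(t)|+\sum_k T8(k)\,|w_k^I(t)|\le0$ under the hypotheses, so $\mathcal{V}$ is nonincreasing, hence bounded by $\mathcal{V}$ evaluated on the initial data over $[-\bar\tau,0]$ with $\bar\tau=\max_{j,k}\tau_{jk}$. Since $V(t)\le\mathcal{V}(t)$, this yields $\|\dot{Z}(t)\|_{\{\xi,1\}}=e^{-\epsilon t}V(t)=O(e^{-\epsilon t})$, which is (\ref{e1}).

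Finally, (\ref{e1}) gives $\int_0^{\infty}\|\dot{Z}(t)\|\,dt<\infty$, so $Z(t)$ is Cauchy and converges to some $\overline{Z}=({\overline{Z}^R}^T,{\overline{Z}^I}^T)^T$; continuity of the right-hand sides together with $\dot{Z}(t)\to0$ forces $\overline{Z}$ to be an equilibrium of (\ref{realf})--(\ref{imagf}), which settles existence. Estimate (\ref{e2}) then follows from $\|Z(t)-\overline{Z}\|_{\{\xi,1\}}\le\int_t^{\infty}\|\dot{Z}(s)\|_{\{\xi,1\}}\,ds=O(e^{-\epsilon t})$, and uniqueness is immediate, because the constant solution starting at any equilibrium must itself converge to $\overline{Z}$ by (\ref{e2}).
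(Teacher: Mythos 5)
Your proposal is correct and follows essentially the same route as the paper's Appendix~B: the substitution $x_j=e^{\epsilon t}\dot z_j^R$, $y_j=e^{\epsilon t}\dot z_j^I$, the Lyapunov--Krasovskii functional $L_1$ whose integral terms carry exactly the delayed coefficients $\alpha_{jk}e^{\epsilon\tau_{jk}},\beta_{jk}e^{\epsilon\tau_{jk}}$ (your $Q_R,Q_I$), the regrouping by source index $k$ yielding $\dot L_1\le\sum_k T7(k)|x_k|+\sum_k T8(k)|y_k|\le0$, and the final integration/Cauchy argument for existence and uniqueness. No gaps.
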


Its proof can be found in Appendix B.

\begin{corollary}\label{cor3}
For dynamical systems (\ref{realf}) and (\ref{imagf}), suppose the activation function $f_j(z)$ belongs to class $H_1(\lambda_j^{RR}, \lambda_j^{RI}, \lambda_j^{IR}, \lambda_j^{II})$ and $g_j(z)$ belongs to class $H_2(\mu_j^{RR}, \mu_j^{RI}, \mu_j^{IR}, \mu_j^{II})$, $j=1,\cdots,n$. If there exists a positive vector $\xi=(\xi_1,\cdots,\xi_{n},\phi_1,\cdots,\phi_n)^T>0$, such that, for $k=1,\cdots,n$,
\begin{align*}
&T9(k)=-\xi_kd_k\nonumber\\
&+\bigg[\xi_ka_{kk}^R+\sum_{j=1,j\ne k}^n\xi_j|a_{jk}^R|+\sum_{j=1}^n\phi_j|a_{jk}^I|\bigg]^{+}\lambda_k^{RR}\nonumber\\
&+\bigg[-\xi_ka_{kk}^I+\sum_{j=1,j\ne k}^n\xi_j|a_{jk}^I|+\sum_{j=1}^n\phi_j|a_{jk}^R|\bigg]^{+}\lambda_k^{IR}\nonumber\\
&+\sum\limits_{j=1}^n\xi_j(|b_{jk}^R|\mu_k^{RR}+|b_{jk}^I|\mu_k^{IR})+\phi_j(|b_{jk}^R|\mu_k^{IR}+|b_{jk}^I|\mu_k^{RR})\nonumber\\
&< 0,
\\&T10(k)=-\phi_kd_k\nonumber\\
&+\bigg[\phi_ka_{kk}^R+\sum_{j=1,j\ne k}^n\phi_j|a_{jk}^R|+\sum_{j=1}^n\xi_j|a_{jk}^I|\bigg]^{+}\lambda_k^{II}\nonumber\\
&+\bigg[\phi_ka_{kk}^I+\sum_{j=1,j\ne k}^n\phi_j|a_{jk}^I|+\sum_{j=1}^n\xi_j|a_{jk}^R|\bigg]^{+}\lambda_k^{RI}\nonumber\\
&+\sum\limits_{j=1}^n\xi_j(|b_{jk}^R|\mu_k^{RI}+|b_{jk}^I|\mu_k^{II})+\phi_j(|b_{jk}^R|\mu_k^{II}+|b_{jk}^I|\mu_k^{RI})\nonumber\\
&<0,
\end{align*}
then any solution of systems (\ref{realf}) and (\ref{imagf}) respectively converges to a unique equilibrium
exponentially.
\end{corollary}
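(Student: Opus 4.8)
The plan is to deduce this corollary from Theorem \ref{th1} exactly as Corollary \ref{cor1} is deduced from Theorem \ref{thinfty}, namely by a continuity (limiting) argument in the decay-rate parameter $\epsilon$. The whole content is the observation that the strict conditions here are the $\epsilon=0$ specialization of the conditions of Theorem \ref{th1}, together with a strict-to-nonstrict perturbation.

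First I would observe that the left-hand sides $T9(k)$ and $T10(k)$ are precisely the expressions $T7(k)$ and $T8(k)$ of Theorem \ref{th1} evaluated at $\epsilon=0$. Indeed, setting $\epsilon=0$ replaces the factor $\xi_k(-d_k+\epsilon)$ by $-\xi_k d_k$, replaces $\phi_k(-d_k+\epsilon)$ by $-\phi_k d_k$, and replaces each delay factor $e^{\epsilon\tau_{jk}}$ by $1$, while the bracketed positive-part terms $[\,\cdot\,]^{+}$ multiplying $\lambda_k^{RR},\lambda_k^{IR},\lambda_k^{II},\lambda_k^{RI}$ do not involve $\epsilon$ at all. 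Hence $T7(k)\big|_{\epsilon=0}=T9(k)$ and $T8(k)\big|_{\epsilon=0}=T10(k)$ for every $k=1,\cdots,n$, a point that only requires a routine termwise inspection.

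Next I would note that, for each fixed $k$ and for the given positive vector $\xi$, the maps $\epsilon\mapsto T7(k)$ and $\epsilon\mapsto T8(k)$ are continuous on $[0,\infty)$, being finite sums of terms that are either affine in $\epsilon$ or of the form $(\text{const})\,e^{\epsilon\tau_{jk}}$, the $[\,\cdot\,]^{+}$ coefficients being constant in $\epsilon$. Since the hypotheses supply the \emph{strict} inequalities $T9(k)<0$ and $T10(k)<0$ for all $k$, and there are only finitely many of them, continuity guarantees a sufficiently small $\epsilon>0$ for which $T7(k)\le 0$ and $T8(k)\le 0$ hold simultaneously for all $k=1,\cdots,n$.

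With such an $\epsilon>0$ and the same positive vector $\xi=(\xi_1,\cdots,\xi_n,\phi_1,\cdots,\phi_n)^T$, the hypotheses of Theorem \ref{th1} are met, so that theorem yields a unique equilibrium together with the exponential estimates (\ref{e1}) and (\ref{e2}) in the $\{\xi,1\}$-norm, which is the desired conclusion. I do not expect any genuine obstacle: the only step that warrants care is the termwise verification that the conditions of this corollary coincide with the $\epsilon=0$ limit of the conditions of Theorem \ref{th1}, after which the passage from strict inequalities at $\epsilon=0$ to nonstrict inequalities at small $\epsilon>0$ is a standard finite-family continuity argument.
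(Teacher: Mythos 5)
Your proposal is correct and matches the paper's own (implicitly stated) argument: the paper treats Corollary \ref{cor3} exactly as it treats Corollary \ref{cor1}, namely by noting that the strict inequalities $T9(k)<0$, $T10(k)<0$ are the $\epsilon=0$ specializations of $T7(k)\le 0$, $T8(k)\le 0$ and invoking continuity in $\epsilon$ over the finite family of conditions to apply Theorem \ref{th1}. No gap; nothing further is needed.
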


\begin{corollary}\label{cor4}
For dynamical systems (\ref{realf}) and (\ref{imagf}), suppose the activation function $f_j(z)$ belongs to class $H_2(\lambda_j^{RR}, \lambda_j^{RI}, \lambda_j^{IR}, \lambda_j^{II})$ and $g_j(z)$ belongs to class $H_2(\mu_j^{RR}, \mu_j^{RI}, \mu_j^{IR}, \mu_j^{II})$, $j=1,\cdots,n$. If there exists a positive vector $\xi=(\xi_1,\cdots,\xi_{n},\phi_1,\cdots,\phi_n)^T>0$, such that, for $k=1,\cdots,n$,
\begin{align}\label{t11}
&T11(k)=-\xi_kd_k+\bigg[\sum_{j=1}^n\xi_j|a_{jk}^R|+\sum_{j=1}^n\phi_j|a_{jk}^I|\bigg]\lambda_k^{RR}
\nonumber\\&+\bigg[\sum_{j=1}^n\xi_j|a_{jk}^I|+\sum_{j=1}^n\phi_j|a_{jk}^R|\bigg]\lambda_k^{IR}\nonumber\\
&+\sum\limits_{j=1}^n\xi_j(|b_{jk}^R|\mu_k^{RR}+|b_{jk}^I|\mu_k^{IR})+\phi_j(|b_{jk}^R|\mu_k^{IR}+|b_{jk}^I|\mu_k^{RR})\nonumber\\
&< 0,
\end{align}
\begin{align}\label{t12}
&T12(k)=-\phi_{k}d_{k}+\bigg[\sum_{j=1}^n\phi_{j}|a_{jk}^R|+\sum_{j=1}^n\xi_j|a_{jk}^I|\bigg]\lambda_{k}^{II}\nonumber\\
&+\bigg[\sum_{j=1}^n\phi_{j}|a_{j,k}^I|+\sum_{j=1}^n\xi_j|a_{jk}^R|\bigg]\lambda_{k}^{RI}\nonumber\\
&+\sum\limits_{j=1}^n\xi_j(|b_{jk}^R|\mu_{k}^{RI}+|b_{jk}^I|\mu_{k}^{II})+\phi_{j}(|b_{jk}^R|\mu_{k}^{II}+|b_{jk}^I|\mu_{k}^{RI})\nonumber\\
&< 0,
\end{align}
then any solution of systems (\ref{realf}) and (\ref{imagf}) respectively converges to a unique equilibrium
exponentially.
\end{corollary}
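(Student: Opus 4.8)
The plan is to follow the proof of Theorem~\ref{th1} (Appendix~B) essentially verbatim, isolating the single place where the positivity built into Definition~\ref{impro} ($f_j\in H_1$) is exploited and replacing it by the two--sided bound guaranteed by Definition~\ref{lip} ($f_j\in H_2$). The motivation is that Definitions~\ref{impro} and~\ref{lip} differ only in that $H_1$ forces the four partial derivatives of $f_j$ to be \emph{positive} and bounded by the $\lambda$'s, while $H_2$ only bounds their absolute values by the same $\lambda$'s. Since the delayed activations $g_j$ are already assumed in $H_2$ in both statements, every estimate in Appendix~B that passes through $|\partial g|\le\mu$ is untouched; only the non--delayed $f$--terms require re--examination, and among those only the diagonal ones.

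First I would reproduce the differential inequality for the $\{\xi,1\}$--weighted norm of the state differences exactly as in Appendix~B, writing $f_k^R\big(z_k^R,z_k^I\big)-f_k^R\big(\overline z_k^R,\overline z_k^I\big)$ (and the analogue for $f_k^I$) through the mean value theorem as a derivative factor times the increments $\big(z_k^R-\overline z_k^R\big)$, $\big(z_k^I-\overline z_k^I\big)$. Collecting, as in the proof of Theorem~\ref{th1}, all contributions proportional to $\lambda_k^{RR}\,|z_k^R-\overline z_k^R|$ gives, under $H_1$, the bracket $\big[\xi_k a_{kk}^R+\sum_{j\ne k}\xi_j|a_{jk}^R|+\sum_j\phi_j|a_{jk}^I|\big]^{+}$ of $T9(k)$, the positive part and the signed diagonal entry $\xi_k a_{kk}^R$ arising precisely because $\partial f_k^R/\partial z^R\in(0,\lambda_k^{RR}]$ is positive. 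Under $H_2$ the same derivative lies in $[-\lambda_k^{RR},\lambda_k^{RR}]$, so the sharp bound on the diagonal contribution is $\xi_k|a_{kk}^R|\lambda_k^{RR}$; absorbing it into the full sum $\sum_j\xi_j|a_{jk}^R|$ and using $|\partial f_k^R/\partial z^R|\le\lambda_k^{RR}$ directly makes the bracket a sum of nonnegative quantities, so the outer $[\,\cdot\,]^{+}$ is vacuous. The coefficient of $\lambda_k^{RR}|z_k^R-\overline z_k^R|$ becomes exactly the bracket in $T11(k)$; treating the three remaining derivatives of $f_k$ the same way reproduces the rest of $T11(k)$ and, for the imaginary block, $T12(k)$.

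With the coefficients identified, the tail of the proof is unchanged. Because $T11(k)<0$ and $T12(k)<0$ are strict for every $k$, I would choose $\epsilon>0$ small enough that adding $\xi_k\epsilon$ (resp.\ $\phi_k\epsilon$) and multiplying each delayed term by $e^{\epsilon\tau_{jk}}$ leaves both expressions $\le 0$, exactly the passage used to obtain Corollary~\ref{cor3} from Theorem~\ref{th1}. Weighting the increments by $e^{\epsilon t}$ and running the Appendix~B continuity/contradiction estimate then yields $\|\dot Z(t)\|_{\{\xi,1\}}=O(e^{-\epsilon t})$, i.e.\ (\ref{e1}) in $\{\xi,1\}$--norm; integrating gives $\int_0^\infty\|\dot Z(t)\|_{\{\xi,1\}}\,dt<\infty$, so $Z(t)$ is Cauchy and converges. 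The limit is an equilibrium, and the same contraction estimate forces it to be unique, giving the exponential bound (\ref{e2}).

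The one step demanding care is the loss of diagonal sign information: under $H_1$ a self--inhibitory entry $a_{kk}^R<0$ actively helps stability and is discarded by $\{a_{kk}^R\}^{+}=0$, whereas under $H_2$ no such sign is available and one must pay the full $|a_{kk}^R|\lambda_k^{RR}$. The main obstacle is therefore bookkeeping rather than analysis: one must check that this is the \emph{only} alteration---that the off--diagonal and delayed terms, already built from absolute values in Appendix~B, are genuinely insensitive to weakening $H_1$ to $H_2$---so that the substitution turns $T9,T10$ into $T11,T12$ with nothing else changing. As a consistency check, $\{a_{kk}^R\}^{+}\le|a_{kk}^R|$ and $\{-a_{kk}^I\}^{+}\le|a_{kk}^I|$ give $T9(k)\le T11(k)$ and $T10(k)\le T12(k)$, so the present hypotheses are the strictly stronger conditions, as one expects given the weaker activation class.
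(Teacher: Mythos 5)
Your proposal is correct and matches the paper's (largely implicit) route: the paper offers no separate proof of Corollary~\ref{cor4}, intending it to follow from the Appendix~B argument for Theorem~\ref{th1} with the sole change that, under $H_2$, the signed diagonal term $\xi_k a_{kk}^R$ inside the positive-part bracket must be replaced by $\xi_k|a_{kk}^R|$ (and likewise for $a_{kk}^I$), which turns $T9,T10$ into $T11,T12$ exactly as you describe, after which strictness supplies the small $\epsilon$ and the finite-length/Cauchy argument gives existence, uniqueness, and exponential convergence. Your closing consistency check ($T9\le T11$, $T10\le T12$) and your identification of the diagonal sign information as the only point where $H_1$ is exploited are both accurate.
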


\subsection{Some comparisons}
The following theorem is a direct consequence of Corollary \ref{cor2}, Corollary \ref{cor4} and the properties of the M-matrix.
\begin{theorem}\label{matrix}
For dynamical systems (\ref{realf}) and (\ref{imagf}), suppose the activation function $f_j(z)$ belongs to class $H_2(\lambda_j^{RR}, \lambda_j^{RI}, \lambda_j^{IR}, \lambda_j^{II})$ and $g_j(z)$ belongs to class $H_2(\mu_j^{RR}, \mu_j^{RI}, \mu_j^{IR}, \mu_j^{II})$, $j=1,\cdots,n$. Denote
\begin{align}\label{notg}
&\overline{D}=\left(\begin{array}{cc}D&0\\0&D\end{array}\right),\nonumber\\
&\overline{A}=\left(\begin{array}{cc}|A^R|&|A^I|\\|A^I|&|A^R|\end{array}\right),
\overline{F}=\left(\begin{array}{cc}F^{RR}&F^{RI}\\ F^{IR}&F^{II}\end{array}\right),\nonumber\\
&\overline{B}=\left(\begin{array}{cc}|B^R|&|B^I|\\|B^I|&|B^R|\end{array}\right),
\overline{G}=\left(\begin{array}{cc}G^{RR}&G^{RI}\\ G^{IR}&G^{II}\end{array}\right).
\end{align}
If $\overline{D}-\overline{A}\overline{F}-\overline{B}\overline{G}$ is a nonsingular M-matrix, then any solution of systems (\ref{realf}) and (\ref{imagf}) respectively converges to a unique equilibrium 
exponentially.
\end{theorem}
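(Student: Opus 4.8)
The plan is to recognize the M-matrix hypothesis as a coordinate-free repackaging of the scalar inequalities already appearing in Corollary \ref{cor2} and Corollary \ref{cor4}, after which Lemma \ref{m-matrix} supplies the positive weight vector those corollaries demand. Write $M = \overline{D} - \overline{A}\,\overline{F} - \overline{B}\,\overline{G}$ and index its $2n$ rows and columns by $1,\dots,n$ (real part) followed by $n+1,\dots,2n$ (imaginary part). First I would check that $M$ is a Z-matrix in the sense required by Lemma \ref{m-matrix}: the diagonal is inherited from $\overline{D}=\mathrm{diag}(d_1,\dots,d_n,d_1,\dots,d_n)$ with each $d_j>0$, whereas every entry of $\overline{A}\,\overline{F}$ and $\overline{B}\,\overline{G}$ is a product of a modulus $|a_{jk}^{\cdot}|$ or $|b_{jk}^{\cdot}|$ with a non-negative gain $\lambda_k^{\cdot\cdot}$ or $\mu_k^{\cdot\cdot}$, hence non-negative. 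Subtracting therefore leaves all off-diagonal entries of $M$ non-positive, so the phrase ``$M$ is a nonsingular M-matrix'' is meaningful and Lemma \ref{m-matrix} applies as stated. This is also exactly the point at which the hypothesis $f_j\in H_2$ is used: it lets $\overline{A}$ be built from the moduli $|A^R|,|A^I|$ uniformly, matching Corollary \ref{cor2} and Corollary \ref{cor4}, whose diagonal terms appear as $|a_{kk}^{\cdot}|$ rather than through the sign-dependent $\{\cdot\}^{+}$ expressions of the $H_1$ criteria.

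The core of the argument is a block-matrix identification. Setting $\eta=(\xi_1,\dots,\xi_n,\phi_1,\dots,\phi_n)^T$, I would expand the four blocks of $\overline{A}\,\overline{F}$, namely top-left $|A^R|F^{RR}+|A^I|F^{IR}$, top-right $|A^R|F^{RI}+|A^I|F^{II}$, bottom-left $|A^I|F^{RR}+|A^R|F^{IR}$, and bottom-right $|A^I|F^{RI}+|A^R|F^{II}$, together with the analogous blocks of $\overline{B}\,\overline{G}$, and match them term by term against the eight sums in $T5(j)$ and $T6(j)$ of equations (\ref{t5}) and (\ref{t6}). This yields the identities $T5(j)=-[M\eta]_j$ and $T6(j)=-[M\eta]_{n+j}$ for $j=1,\dots,n$, so that the defining inequalities $T5(j)<0$, $T6(j)<0$ of Corollary \ref{cor2} are equivalent to the single vector statement $M\eta>0$ (all $2n$ components strictly positive). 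Carrying out the same bookkeeping for Corollary \ref{cor4}, where the summation index and the free index are interchanged, presents $T11(k)$ and $T12(k)$ of (\ref{t11}) and (\ref{t12}) as the components of $-M^{T}\eta$, so that $T11(k)<0$, $T12(k)<0$ are equivalent to $M^{T}\eta>0$.

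With these two equivalences in place, the conclusion follows immediately from Lemma \ref{m-matrix}. Since $M$ is a nonsingular M-matrix, characterization (4) of that lemma produces a positive vector $\eta$ with $M\eta>0$; splitting $\eta$ into its $\xi$- and $\phi$-halves yields precisely a positive vector satisfying $T5(j)<0$ and $T6(j)<0$, so Corollary \ref{cor2} applies and every solution of (\ref{realf})--(\ref{imagf}) converges exponentially to the unique equilibrium. Equally, characterization (2) gives that $M^{T}$ is an M-matrix, hence a positive $\eta$ with $M^{T}\eta>0$, which feeds Corollary \ref{cor4} instead; this is why the statement follows from either corollary. The only genuinely delicate step is the block accounting of the second paragraph: one must keep straight which of $|A^R|,|A^I|$ (or $|B^R|,|B^I|$) multiplies which gain and whether a given contribution lands in the real-part or imaginary-part block, and likewise track the transposed role of the row and column indices when passing between the $\infty$-norm criterion and the $1$-norm criterion. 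Everything else is a direct appeal to the cited corollaries and to Lemma \ref{m-matrix}.
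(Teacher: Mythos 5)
Your proposal is correct and follows essentially the same route as the paper: invoke Lemma \ref{m-matrix} to obtain a positive vector $\eta$ with $M\eta>0$ (equivalently $T5(j)<0$, $T6(j)<0$) and apply Corollary \ref{cor2}, or with $M^{T}\eta>0$ (equivalently $T11(k)<0$, $T12(k)<0$) and apply Corollary \ref{cor4}. Your explicit Z-matrix check and block-by-block identification simply fill in details the paper leaves implicit.
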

\begin{proof}
If $\overline{D}-\overline{A}\overline{F}-\overline{B}\overline{G}$ is a nonsingular M-matrix, according to Lemma \ref{m-matrix}, there exists vector $\xi=(\xi_1,\cdots,\xi_{n},\phi_1,\cdots,\phi_n)^T>0$, such that $(\overline{D}-\overline{A}\overline{F}-\overline{B}\overline{G})\xi>0$, that is, inequalities (\ref{t5}) and (\ref{t6}) hold. Therefore, the conclusion is a direct consequence of Corollary \ref{cor2}.

On the other hand, if $\overline{D}-\overline{A}\overline{F}-\overline{B}\overline{G}$ is a nonsingular M-matrix, according to Lemma \ref{m-matrix}, there exists vector $\xi=(\xi_1,\cdots,\xi_{n},\phi_1,\cdots,\phi_n)^T>0$, such that $\xi^T(\overline{D}-\overline{A}\overline{F}-\overline{B}\overline{G})>0$, that is, inequalities (\ref{t11}) and (\ref{t12}) hold. Therefore, the conclusion is also a direct consequence of Corollary \ref{cor4}.
\end{proof}

\begin{remark}\label{key}
Criterion based on M-matrix was also reported in
\cite{HW2012}. However, it neglects the signs of entries in the connection
matrices $A$ and $B$, and thus, the difference between excitatory and
inhibitory effects might be ignored. Comparatively, the criteria given in Theorem
\ref{thinfty}, Theorem \ref{th1}, Corollary \ref{cor1}, Corollary \ref{cor3}
are more powerful.

In the following, we give a comparison between Corollary \ref{cor1} and Theorem \ref{matrix} by using the matrix theory. Denote matrices
\begin{align*}
P_1&=\mathrm{diag}(|a_{11}^R|-\{a_{11}^R\}^{+},\cdots,|a_{nn}^R|-\{a_{nn}^R\}^{+});\\
P_2&=\mathrm{diag}(|a_{11}^I|-\{-a_{11}^I\}^{+},\cdots,|a_{nn}^I|-\{-a_{nn}^I\}^{+});\\
P_3&=\mathrm{diag}(|a_{11}^I|-\{a_{11}^I\}^{+},\cdots,|a_{nn}^I|-\{a_{nn}^I\}^{+}).
\end{align*}
Obviously, these matrices are all non-negative definite. Define
\begin{align}\label{del}
\overline{\Delta}=\left(\begin{array}{cc}
P_1F^{RR}+P_2F^{IR}&0\\0&P_1F^{II}+P_3F^{RI}
\end{array}\right),
\end{align}
so it is also non-negative definite. Using this notation, and from Corollary \ref{cor1}, the sufficient condition for global stability is that
\begin{align}\label{old}
\overline{D}-\overline{A}\overline{F}-\overline{B}\overline{G}+\overline{\Delta}
\end{align}
should be a nonsingular M-matrix. Obviously, if $\overline{D}-\overline{A}\overline{F}-\overline{B}\overline{G}$ is a nonsingular M-matrix, the above matrix (\ref{old}) is also a nonsingular M-matrix; instead, if matrix (\ref{old}) is a nonsingular M-matrix, $\overline{D}-\overline{A}\overline{F}-\overline{B}\overline{G}$ may be not.

Therefore, Corollary \ref{cor1} presents a better criterion than that by previous works, like \cite{HW2012}, because it considers the signs of entries in the connection
matrix $A$, whose positive effect is described by the above nonnegative matrix $\overline{\Delta}$ defined in (\ref{del}). Moreover, from this result, we can also find that in order to make the CVNNs have the stable equilibrium, $P_1, P_2, P_3$ should be as large as possible, so one way is to make all $a_{jj}^R, j=1,\cdots,n$ be negative numbers.
\end{remark}

\begin{remark}
The function $M(t)=\max_{t}\max_{i=1,\cdots,m}|u_i(t)|$
proposed in \cite{C2001} is a powerful tool in dealing with delayed
systems. In particular, for the time-varying delays.
\end{remark}

\begin{remark} It can be seen that in computing the integral $\int_{0}^{\infty}||\dot{Z}(t)||dt$, the estimation of $\frac{d}{dt}||\dot{Z}(t)||$ plays an important
role.

Let $A(t)=(a_{ij})_{i,j=1}^{N}$,
$\xi_{i}>0$, $i=1,\cdots,N$ and
\begin{equation}
\frac{dw}{dt}=Aw(t) \label{Lemma2}
\end{equation}
It has been shown that (see \cite{CA2001,C2001})
\begin{align*}
\max\frac{\frac{d}{dt}\|w(t)\|_{\{\xi, 1\}}}{\|w(t)\|_{\{\xi,
1\}}}&=\max_{j}[a_{jj}+\sum_{i\ne
j}\frac{\xi_{i}}{\xi_{j}}|a_{ij}|],
\\
\max\frac{\frac{d}{dt}\|w(t)\|_{\{\xi, \infty\}}}{\|w(t)\|_{\{\xi,
\infty\}}}&=\max_{i}[a_{ii}+\sum_{j\ne
i}\frac{\xi_{j}}{\xi_{i}}|a_{ij}|],\\
\max\frac{\frac{d}{dt}\|w(t)\|_{\{\xi, 2\}}^2}{\|w(t)\|_{\{\xi,
2\}}^2}&=\lambda_{max}(\Xi A+A^T\Xi), ~\Xi=\mathrm{diag}(\xi).
\end{align*}
\begin{align*}
&\frac{d}{dt}\|w(t)\|_{\{\xi, 1\}}
=\sum_{i=1}^n sign(w_{i}(t))\xi_{i} \sum_{j=1}^n a_{ij}w_{j}(t)\\ &=
\sum_{j=1}^n [\sum_{i}sign(w_{i}(t))\frac{\xi_{i}}{\xi_j}a_{ij}]\xi_jw_{j}(t)\\ &\le  \sum_{j}[a_{jj}+\sum_{i\ne
j}\frac{\xi_{i}}{\xi_{j}}|a_{ij}|]|\xi_{j}w_{j}(t)|\\
\nonumber &\le  \max_{j}[a_{jj}+\sum_{i\ne
j}\frac{\xi_{i}}{\xi_{j}}|a_{ij}|]\|w(t)\|_{\{\xi, 1\}}
\end{align*}
Therefore,
\begin{align*}
\max\frac{\frac{d}{dt}\|w(t)\|_{\{\xi, 1\}}}{\|w(t)\|_{\{\xi,
1\}}}&=\max_{j}[a_{jj}+\sum_{i\ne
j}\frac{\xi_{i}}{\xi_{j}}|a_{ij}|].
\end{align*}
Similarly, we can prove the other two equalities.

These three equalities play very important role in discussing stability of the neural networks or other dynamical systems.
For example, if $\max_{j}[a_{jj}+\sum_{i\ne
j}\frac{\xi_{i}}{\xi_{j}}|a_{ij}|]\le -\alpha<0$, then
$\frac{d}{dt}\|w(t)\|_{\{\xi, 1\}}\le -\alpha\|w(t)\|_{\{\xi,
1\}}$, which implies $\|w(t)\|_{\{\xi, 1\}}=O(e^{-\alpha t})$.

It happens that these three equalities are closely relating to the matrix measure of $A$ with respect to three norms.
\end{remark}

\subsection{Criteria with $\{\xi,2\}$-norm}
\begin{theorem}\label{th2}
For dynamical systems (\ref{realf}) and (\ref{imagf}), suppose the activation function $f_j(z)$ belongs to class $H_1(\lambda_j^{RR}, \lambda_j^{RI}, \lambda_j^{IR}, \lambda_j^{II})$ and $g_j(z)$ belongs to class $H_2(\mu_j^{RR}, \mu_j^{RI}, \mu_j^{IR}, \mu_j^{II})$, $j=1,\cdots,n$. If there exists a positive vector $\xi=(\xi_1,\cdots,\xi_{n},\phi_1,\cdots,\phi_n)^T>0$ and $\epsilon>0$, such that, for $j=1,\cdots,n$,
\begin{align*}
&T13(j)=2\xi_j(-d_j+\epsilon+\{a_{jj}^R\}^{+}\lambda_j^{RR}
+\{-a_{jj}^I\}^{+}\lambda_j^{IR})\nonumber\\
&+\sum_{k=1, k\ne j}^n\xi_j(|a_{jk}^R|\lambda_k^{RR}+|a_{jk}^I|\lambda_k^{IR})
\pi1_{jk}\nonumber\\
&+\sum_{k=1, k\ne j}^n\xi_k(|a_{kj}^R|\lambda_j^{RR}+|a_{kj}^I|\lambda_j^{IR})
\pi1_{kj}^{-1}\nonumber\\
&+\sum_{k=1}^n\xi_j(|a_{jk}^R|\lambda_k^{RI}+|a_{jk}^I|\lambda_k^{II}) \pi2_{jk}\nonumber\\
&+\sum_{k=1}^n\xi_j(|b_{jk}^R|\mu_k^{RR}+|b_{jk}^I|\mu_k^{IR}) \pi3_{jk}\nonumber\\
&+\sum_{k=1}^n\xi_j(|b_{jk}^R|\mu_k^{RI}+|b_{jk}^I|\mu_k^{II})\pi4_{jk}
\nonumber\\
&+\sum_{k=1}^n\phi_k(|a_{kj}^R|
\mu_j^{IR}+|a_{kj}^I|\mu_j^{RR})\omega1^{-1}_{kj}\nonumber\\
&+\bigg(\sum_{k=1}^n\xi_k(|b_{kj}^R|\mu_j^{RR}+|b_{kj}^I|\mu_j^{IR}) \pi3_{kj}^{-1}
\nonumber\\
&+\sum_{k=1}^n\phi_k(|b_{kj}^R|\mu_{j}^{IR}+|b_{kj}^I|\mu_j^{RR}\bigg)\omega3^{-1}_{kj})e^{2\epsilon \tau_{kj}}\le 0,
\\&T14(j)=2\phi_{j}(-d_j+\epsilon+\{a_{jj}^R\}^{+}\mu_j^{RR}+\{a_{jj}^I\}^{+}\mu_j^{RI})\nonumber\\
&+\sum_{k=1}^n\phi_j(|a_{jk}^R|
\mu_k^{IR}+|a_{jk}^I|\mu_k^{RR})\omega1_{jk}\nonumber\\
&+\sum_{k=1, k\ne j}^n\phi_j(|a_{jk}^R|\mu_k^{II}+|a_{jk}^I|\mu_k^{RI}) \omega2_{jk}\nonumber\\
&+\sum_{k=1, k\ne j}^n\phi_k(|a_{kj}^R|\mu_j^{II}+|a_{kj}^I|\mu_j^{RI})\omega2^{-1}_{kj}
\nonumber\\
&+\sum_{k=1}^n\phi_j(|b_{jk}^R|\mu_{k}^{IR}+|b_{jk}^I|\mu_k^{RR})\omega3_{jk}\nonumber\\
&+\sum_{k=1}^n\phi_j(|b_{jk}^R|\mu_k^{II}+|b_{jk}^I|\mu_k^{RI}) \omega4_{jk}\nonumber\\
&+\sum_{k=1}^n\xi_k(|a_{kj}^R|\lambda_j^{RI}+|a_{kj}^I|\lambda_j^{II}) \pi2_{kj}^{-1}\nonumber\\
&+\bigg(\sum_{k=1}^n\xi_k(|b_{kj}^R|\mu_j^{RI}+|b_{kj}^I|\mu_j^{II})\pi4^{-1}_{kj} \nonumber\\
&+\sum_{k=1}^n\phi_k(|b_{kj}^R|\mu_j^{II}+|b_{kj}^I|\mu_j^{RI}) \omega4^{-1}_{kj}\bigg)e^{2\epsilon \tau_{kj}}\le 0,
\end{align*}
where $\pi1_{jk}, \pi2_{jk}, \pi3_{jk}, \pi4_{jk}, \omega1_{jk}, \omega2_{jk}, \omega3_{jk}, \omega4_{jk}$ are positive numbers. Then dynamical systems (\ref{realf}) and (\ref{imagf}) have a unique equilibrium $\overline{Z}^R$ and $\overline{Z}^I$ respectively. Moreover, for any solution $Z(t)$ defined by (\ref{z}), equations (\ref{e1}) and (\ref{e2}) hold, where the norm is $\{\xi, 2\}$-norm.
\end{theorem}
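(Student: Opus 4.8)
The plan is to reuse the direct, differentiate-the-trajectory strategy of Theorems~\ref{thinfty} and~\ref{th1}, but to measure the velocity in the quadratic $\{\xi,2\}$-norm, so that the governing infinitesimal growth rate is the symmetrized matrix measure $\lambda_{max}(\Xi A+A^T\Xi)$ recorded in the Remark above rather than a row/column sum. First I would differentiate systems~(\ref{realf}) and~(\ref{imagf}) in $t$ and write $W_j^R(t)=\dot z_j^R(t)$, $W_j^I(t)=\dot z_j^I(t)$. Since $f_k\in H_1$ and $g_k\in H_2$ possess continuous partial derivatives, the chain rule turns the derivative of each nonlinear term into a bounded partial derivative (bounded along the solution by the appropriate $\lambda$ or $\mu$) multiplied by a velocity component $W_k^R$, $W_k^I$, the delayed terms producing $W_k^R(t-\tau_{jk})$, $W_k^I(t-\tau_{jk})$. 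Thus $W=(W^R,W^I)^T$ solves a linear time-varying delay system with coefficients uniformly bounded in modulus by the $H_1$/$H_2$ constants.

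Next I would study the exponentially weighted quadratic functional
\[
V(t)=e^{2\epsilon t}\|W(t)\|_{\{\xi,2\}}^2=e^{2\epsilon t}\Big(\sum_{j}\xi_j|W_j^R(t)|^2+\sum_{j}\phi_j|W_j^I(t)|^2\Big),
\]
and differentiate it. The $2\epsilon t$ weight produces the $+\epsilon$ inside each leading bracket of $T13$ and $T14$; the diagonal self-products supply $\{a_{jj}^R\}^{+}\lambda_j^{RR}$, $\{-a_{jj}^I\}^{+}\lambda_j^{IR}$ and their $T14$ analogues by the sign trick already used in Theorem~\ref{thinfty}: because the $f_j$-derivatives are positive and bounded, $a_{jj}\,\partial f_j$ is bounded above by $\{a_{jj}\}^{+}$ times the corresponding $\lambda$ irrespective of the sign of $a_{jj}$, which is exactly where the excitatory/inhibitory distinction enters.

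The genuinely new work, and the heart of the argument, is to convert every mixed product $W_aW_b$ (with $a\ne b$, including a present factor against a delayed one) into pure squares via $2|W_aW_b|\le\pi|W_a|^2+\pi^{-1}|W_b|^2$. Each split spends one of the free positive parameters $\pi1_{jk},\dots,\omega4_{jk}$: the weight $\pi$ is charged to the coefficient of $|W_j^R|^2$ and so lands in $T13(j)$, while the reciprocal $\pi^{-1}$ is charged, after interchanging the summation indices, to its partner square and lands in $T14$ (or back in $T13$). The conjugate pairings $\pi1_{jk}/\pi1_{kj}^{-1}$, $\pi2_{jk}/\pi2_{kj}^{-1}$, $\pi3_{jk}/\pi3_{kj}^{-1}$, $\omega1/\omega1^{-1}$, $\dots$ visible in the two sums are precisely these two halves. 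A delayed square satisfies $e^{2\epsilon t}|W_k(t-\tau_{kj})|^2=e^{2\epsilon\tau_{kj}}e^{2\epsilon(t-\tau_{kj})}|W_k(t-\tau_{kj})|^2$, which is where the factor $e^{2\epsilon\tau_{kj}}$ multiplying the delayed blocks of $T13$ and $T14$ comes from. Invoking the running-maximum function $M(t)=\sup_{-\bar\tau\le s\le t}V(s)$ of \cite{C2001}, the delayed contributions are dominated by $M(t)$, so at any instant where $V$ reaches a fresh maximum the upper Dini derivative collapses to $\sum_j\big(T13(j)\,e^{2\epsilon t}|W_j^R|^2+T14(j)\,e^{2\epsilon t}|W_j^I|^2\big)\le0$ by hypothesis. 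Hence $V$ never exceeds its initial value, giving $\|\dot Z(t)\|_{\{\xi,2\}}=O(e^{-\epsilon t})$, i.e.\ (\ref{e1}); integrating this bound yields $\int_0^\infty\|\dot Z\|\,dt<\infty$, so by the direct approach of \cite{CA2001,C2001} $Z(t)$ converges to the (necessarily unique) equilibrium, and $\|Z(t)-({\overline{Z}^R}^T,{\overline{Z}^I}^T)^T\|_{\{\xi,2\}}=\|\int_t^\infty\dot Z\,ds\|_{\{\xi,2\}}=O(e^{-\epsilon t})$ gives (\ref{e2}).

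I expect the main obstacle to be not any single estimate but the combinatorial accounting of the eight families of AM--GM parameters: one must check that after summing over $j$ and over all off-diagonal and delayed pairs, every mixed product is counted exactly once and that its two halves occupy the claimed slots of $T13$ and $T14$, with the delay weights $e^{2\epsilon\tau_{kj}}$ attached to exactly the reciprocal (re-indexed) halves. The quadratic norm is harder than the $\infty$- and $1$-norms handled earlier, where the max/sum structure avoids cross products entirely; here the cross products are unavoidable and it is precisely the freedom in the $\pi$'s and $\omega$'s that keeps the resulting criterion non-vacuous.
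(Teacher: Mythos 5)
Your overall architecture (differentiate the systems, weight by $e^{\epsilon t}$, split every cross term by AM--GM with the $\pi$'s and $\omega$'s, pick up $e^{2\epsilon\tau}$ on the delayed squares) matches the intended computation, but the step that closes the argument is flawed. After the AM--GM splittings, the derivative of $V(t)=\|X(t)\|_{\{\xi,2\}}^2$ (with $X=e^{\epsilon t}\dot Z$) still contains the \emph{delayed} squares $x_k^2(t-\tau_{jk})$, $y_k^2(t-\tau_{jk})$ with positive coefficients. You propose to dominate them by the running maximum $M(t)$ and conclude, at an instant where $V(t)=M(t)$, that $\dot V\le\sum_jT13(j)x_j^2+\sum_jT14(j)y_j^2$. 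This does not follow: the bound $\xi_kx_k^2(t-\tau_{jk})\le V(t-\tau_{jk})\le M(t)=V(t)$ replaces each delayed square by the \emph{entire} weighted sum of present squares, not by the single present square $\xi_kx_k^2(t)$ to which its coefficient must attach in order to assemble $T13(k)$. What your estimate actually yields is $\dot V\le\max_j[\text{non-delay part}]\,V+\bigl(\sum_{j,k}[\text{delay coefficients}]\bigr)V$, a Halanay-type condition aggregated over all $j,k$ that is strictly stronger than the stated per-index hypotheses $T13(j)\le0$, $T14(j)\le0$. The running-max device succeeds for the $\{\xi,\infty\}$-norm precisely because there the whole derivative is evaluated at the single maximal component; in the quadratic norm no such component exists, and the pairing between the delayed square of index $k$ and the present square of index $k$ is lost.

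The paper closes this gap with a Lyapunov--Krasovskii functional: to $\sum_j\xi_jx_j^2+\sum_j\phi_jy_j^2$ it adds $\sum_{j,k}\alpha^{\prime}_{jk}e^{2\epsilon\tau_{jk}}\int_{t-\tau_{jk}}^tx_k^2(s)\,ds$ and the analogous $\beta^{\prime}_{jk}$ term, where $\alpha^{\prime}_{jk},\beta^{\prime}_{jk}$ are exactly the coefficients your AM--GM step assigns to the delayed squares. Differentiating the integrals produces $-\alpha^{\prime}_{jk}e^{2\epsilon\tau_{jk}}x_k^2(t-\tau_{jk})$, which cancels every delayed square exactly, plus $+\alpha^{\prime}_{jk}e^{2\epsilon\tau_{jk}}x_k^2(t)$, which after summing over $j$ is precisely the transposed-index delayed block $\bigl(\sum_k\xi_k(\cdots)\pi3_{kj}^{-1}+\sum_k\phi_k(\cdots)\omega3_{kj}^{-1}\bigr)e^{2\epsilon\tau_{kj}}$ appearing in $T13(j)$ (and likewise for $T14(j)$). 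With that substitution your computation goes through, and the rest of your argument (monotonicity of the functional, $\|\dot Z(t)\|=O(e^{-\epsilon t})$, Cauchy convergence to a necessarily unique equilibrium) coincides with the paper's.
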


Its proof can be found in Appendix C.

\begin{corollary}\label{cor5}
For dynamical systems (\ref{realf}) and (\ref{imagf}), suppose the activation function $f_j(z)$ belongs to class $H_1(\lambda_j^{RR}, \lambda_j^{RI}, \lambda_j^{IR}, \lambda_j^{II})$ and $g_j(z)$ belongs to class $H_2(\mu_j^{RR}, \mu_j^{RI}, \mu_j^{IR}, \mu_j^{II})$, $j=1,\cdots,n$. If there exists a positive vector $\xi=(\xi_1,\cdots,\xi_{n},\phi_1,\cdots,\phi_n)^T>0$, such that, for $j=1,\cdots,n$,
\begin{align*}
T15(j)&=2\xi_j(-d_j+\{a_{jj}^R\}^{+}\lambda_j^{RR}
+\{-a_{jj}^I\}^{+}\lambda_j^{IR})\nonumber\\
&+\sum_{k=1, k\ne j}^n\xi_j(|a_{jk}^R|\lambda_k^{RR}+|a_{jk}^I|\lambda_k^{IR})
\pi1_{jk}\nonumber\\
&+\sum_{k=1, k\ne j}^n\xi_k(|a_{kj}^R|\lambda_j^{RR}+|a_{kj}^I|\lambda_j^{IR})
\pi1_{kj}^{-1}\nonumber\\
&+\sum_{k=1}^n\xi_j(|a_{jk}^R|\lambda_k^{RI}+|a_{jk}^I|\lambda_k^{II}) \pi2_{jk}\nonumber\\
&+\sum_{k=1}^n\xi_j(|b_{jk}^R|\mu_k^{RR}+|b_{jk}^I|\mu_k^{IR}) \pi3_{jk}\nonumber\\
&+\sum_{k=1}^n\xi_j(|b_{jk}^R|\mu_k^{RI}+|b_{jk}^I|\mu_k^{II})\pi4_{jk}
\nonumber\\
&+\sum_{k=1}^n\phi_k(|a_{kj}^R|
\mu_j^{IR}+|a_{kj}^I|\mu_j^{RR})\omega1^{-1}_{kj}\nonumber\\
&+\sum_{k=1}^n\xi_k(|b_{kj}^R|\mu_j^{RR}+|b_{kj}^I|\mu_j^{IR}) \pi3_{kj}^{-1}
\nonumber\\
&+\sum_{k=1}^n\phi_k(|b_{kj}^R|\mu_{j}^{IR}+|b_{kj}^I|\mu_j^{RR})\omega3^{-1}_{kj}<0,
\\T16(j)&=2\phi_{j}(-d_j+\{a_{jj}^R\}^{+}\mu_j^{RR}+\{a_{jj}^I\}^{+}\mu_j^{RI})\nonumber\\
&+\sum_{k=1}^n\phi_j(|a_{jk}^R|
\mu_k^{IR}+|a_{jk}^I|\mu_k^{RR})\omega1_{jk}\nonumber\\
&+\sum_{k=1, k\ne j}^n\phi_j(|a_{jk}^R|\mu_k^{II}+|a_{jk}^I|\mu_k^{RI}) \omega2_{jk}\nonumber\\
&+\sum_{k=1, k\ne j}^n\phi_k(|a_{kj}^R|\mu_j^{II}+|a_{kj}^I|\mu_j^{RI})\omega2^{-1}_{kj}
\nonumber\\
&+\sum_{k=1}^n\phi_j(|b_{jk}^R|\mu_{k}^{IR}+|b_{jk}^I|\mu_k^{RR})\omega3_{jk}\nonumber\\
&+\sum_{k=1}^n\phi_j(|b_{jk}^R|\mu_k^{II}+|b_{jk}^I|\mu_k^{RI}) \omega4_{jk}\nonumber\\
&+\sum_{k=1}^n\xi_k(|a_{kj}^R|\lambda_j^{RI}+|a_{kj}^I|\lambda_j^{II}) \pi2_{kj}^{-1}\nonumber\\
&+\sum_{k=1}^n\xi_k(|b_{kj}^R|\mu_j^{RI}+|b_{kj}^I|\mu_j^{II})\pi4^{-1}_{kj} \nonumber\\
&+\sum_{k=1}^n\phi_k(|b_{kj}^R|\mu_j^{II}+|b_{kj}^I|\mu_j^{RI}) \omega4^{-1}_{kj}< 0,
\end{align*}
where $\pi1_{jk}, \pi2_{jk}, \pi3_{jk}, \pi4_{jk}, \omega1_{jk}, \omega2_{jk}, \omega3_{jk}, \omega4_{jk}$ are positive numbers. Then any solution of systems (\ref{realf}) and (\ref{imagf}) respectively converges to a unique equilibrium
exponentially.
\end{corollary}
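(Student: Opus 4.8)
The plan is to deduce Corollary~\ref{cor5} from Theorem~\ref{th2} by a continuity-in-$\epsilon$ argument, exactly in the spirit in which Corollary~\ref{cor1} was obtained from Theorem~\ref{thinfty}. The key observation is that $T15(j)$ and $T16(j)$ are precisely the quantities produced from $T13(j)$ and $T14(j)$ by setting $\epsilon=0$: the explicit terms $2\xi_j\epsilon$ and $2\phi_j\epsilon$ vanish, and each delay factor $e^{2\epsilon\tau_{kj}}$ collapses to $1$. Every other summand, together with the auxiliary positive constants $\pi1_{jk},\ldots,\omega4_{jk}$, occurs identically in both families and carries no dependence on $\epsilon$.

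First I would fix the positive vector $\xi=(\xi_1,\ldots,\xi_n,\phi_1,\ldots,\phi_n)^{T}$ and the positive constants $\pi1_{jk},\ldots,\omega4_{jk}$ that certify the strict inequalities $T15(j)<0$ and $T16(j)<0$ for every $j$. Regarding $T13(j)$ and $T14(j)$ as functions of $\epsilon$ with all other data held fixed, each is a finite sum whose terms are either constant, linear in $\epsilon$, or of the form (nonnegative coefficient)$\times e^{2\epsilon\tau_{kj}}$; hence $\epsilon\mapsto T13(j;\epsilon)$ and $\epsilon\mapsto T14(j;\epsilon)$ are continuous (indeed real-analytic) on $[0,\infty)$, and satisfy $T13(j;0)=T15(j)$ and $T14(j;0)=T16(j)$.

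Next, because the inequalities $T15(j)<0$ and $T16(j)<0$ are strict and there are only finitely many indices $j\in\{1,\ldots,n\}$, continuity at $\epsilon=0$ yields an $\epsilon_0>0$ such that $T13(j;\epsilon)\le 0$ and $T14(j;\epsilon)\le 0$ for all $\epsilon\in(0,\epsilon_0]$ and all $j$. Fixing any such $\epsilon>0$, the hypotheses of Theorem~\ref{th2} hold with this $\epsilon$, the same $\xi,\phi$, and the same auxiliary constants. Theorem~\ref{th2} then furnishes a unique equilibrium of (\ref{realf})--(\ref{imagf}), together with the decay estimates (\ref{e1}) and (\ref{e2}) in the $\{\xi,2\}$-norm; in particular $\|Z(t)-({\overline{Z}^R}^{T},{\overline{Z}^I}^{T})^{T}\|_{\{\xi,2\}}=O(e^{-\epsilon t})$, which is exactly the claimed exponential convergence to the unique equilibrium.

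There is no genuine obstacle here; the sole point requiring verification is that the auxiliary parameters $\pi1_{jk},\ldots,\omega4_{jk}$ chosen to validate the corollary's strict inequalities may be reused verbatim when invoking Theorem~\ref{th2}. This is immediate, since those parameters enter $T13,T14$ and $T15,T16$ through identical, $\epsilon$-free coefficients, so the same selection works and no re-optimization over the $\pi$'s and $\omega$'s is needed.
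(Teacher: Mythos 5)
Your argument is correct and is exactly the route the paper intends: Corollary~\ref{cor5} is obtained from Theorem~\ref{th2} by the same small-$\epsilon$ continuity argument the authors spell out when deducing Corollary~\ref{cor1} from Theorem~\ref{thinfty}, noting that $T15(j)$ and $T16(j)$ are $T13(j)$ and $T14(j)$ evaluated at $\epsilon=0$ with the same $\xi$, $\phi$, and auxiliary constants $\pi1_{jk},\ldots,\omega4_{jk}$. Since the strict inequalities hold for finitely many indices and each $T13(j;\epsilon)$, $T14(j;\epsilon)$ is continuous in $\epsilon$, a sufficiently small $\epsilon>0$ makes the hypotheses of Theorem~\ref{th2} hold, which gives the claimed exponential convergence.
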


\begin{corollary}\label{cor6}
For dynamical systems (\ref{realf}) and (\ref{imagf}), suppose the activation function $f_j(z)$ belongs to class $H_2(\lambda_j^{RR}, \lambda_j^{RI}, \lambda_j^{IR}, \lambda_j^{II})$ and $g_j(z)$ belongs to class $H_2(\mu_j^{RR}, \mu_j^{RI}, \mu_j^{IR}, \mu_j^{II})$, $j=1,\cdots,n$. If there exists a positive vector $\xi=(\xi_1,\cdots,\xi_{n},\phi_1,\cdots,\phi_n)^T>0$, such that, for $j=1,\cdots,n$,
\begin{align*}
&T17(j)=-2\xi_jd_j\nonumber\\
&+\sum_{k=1}^n\xi_j(|a_{jk}^R|\lambda_k^{RR}+|a_{jk}^I|\lambda_k^{IR})
+\sum_{k=1}^n\xi_k(|a_{kj}^R|\lambda_j^{RR}+|a_{kj}^I|\lambda_j^{IR})
\nonumber\\
&+\sum_{k=1}^n\xi_j(|a_{jk}^R|\lambda_k^{RI}+|a_{jk}^I|\lambda_k^{II}) +\sum_{k=1}^n\xi_j(|b_{jk}^R|\mu_k^{RR}+|b_{jk}^I|\mu_k^{IR}) \nonumber\\
&+\sum_{k=1}^n\xi_j(|b_{jk}^R|\mu_k^{RI}+|b_{jk}^I|\mu_k^{II})
+\sum_{k=1}^n\phi_k(|a_{kj}^R|
\mu_j^{IR}+|a_{kj}^I|\mu_j^{RR})\nonumber\\
&+\sum_{k=1}^n\xi_k(|b_{kj}^R|\mu_j^{RR}+|b_{kj}^I|\mu_j^{IR})
+\sum_{k=1}^n\phi_k(|b_{kj}^R|\mu_{j}^{IR}+|b_{kj}^I|\mu_j^{RR})\nonumber\\
&<0,
\\&T18(j)=-2\phi_{j}d_j\nonumber\\
&+\sum_{k=1}^n\phi_j(|a_{jk}^R|
\mu_k^{IR}+|a_{jk}^I|\mu_k^{RR})+\sum_{k=1}^n\phi_j(|a_{jk}^R|\mu_k^{II}+|a_{jk}^I|\mu_k^{RI}) \nonumber\\
&+\sum_{k=1}^n\phi_k(|a_{kj}^R|\mu_j^{II}+|a_{kj}^I|\mu_j^{RI})
+\sum_{k=1}^n\phi_j(|b_{jk}^R|\mu_{k}^{IR}+|b_{jk}^I|\mu_k^{RR})\nonumber\\
&+\sum_{k=1}^n\phi_j(|b_{jk}^R|\mu_k^{II}+|b_{jk}^I|\mu_k^{RI}) +\sum_{k=1}^n\xi_k(|a_{kj}^R|\lambda_j^{RI}+|a_{kj}^I|\lambda_j^{II}) \nonumber\\
&+\sum_{k=1}^n\xi_k(|b_{kj}^R|\mu_j^{RI}+|b_{kj}^I|\mu_j^{II}) +\sum_{k=1}^n\phi_k(|b_{kj}^R|\mu_j^{II}+|b_{kj}^I|\mu_j^{RI})\nonumber\\
&< 0,
\end{align*}
then any solution of systems (\ref{realf}) and (\ref{imagf}) respectively converges to a unique equilibrium
exponentially.
\end{corollary}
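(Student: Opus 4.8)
The plan is to obtain Corollary \ref{cor6} as a direct specialization of Corollary \ref{cor5}, mirroring the way Corollary \ref{cor2} descends from Corollary \ref{cor1} and Corollary \ref{cor4} from Corollary \ref{cor3}. The conditions $T15(j)<0$ and $T16(j)<0$ still carry the free positive auxiliary parameters $\pi1_{jk},\dots,\omega4_{jk}$ coming from the Young-type splitting $2ab\le \pi a^2+\pi^{-1}b^2$ used in the $\{\xi,2\}$-norm estimate. The first step is to set every one of them equal to $1$, which is a legitimate choice; under it each cross factor $\pi1_{jk}$, $\pi1_{kj}^{-1}$, etc., becomes a plain coefficient, so the two restricted sums $\sum_{k\ne j}$ in $T15(j)$ turn into the full sums $\sum_{k}$ appearing in $T17(j)$, up to their $k=j$ diagonal terms, which I account for next.

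The second step is a term-by-term comparison of $T17(j)$ against the specialized $T15(j)$ (and likewise $T18(j)$ against $T16(j)$). With all parameters equal to $1$ every off-diagonal, delayed, and cross contribution coincides, so the only discrepancy is on the diagonal: $T15(j)$ carries $2\xi_j(\{a_{jj}^R\}^{+}\lambda_j^{RR}+\{-a_{jj}^I\}^{+}\lambda_j^{IR})$, whereas the $k=j$ pieces of the full sums in $T17(j)$ contribute $2\xi_j(|a_{jj}^R|\lambda_j^{RR}+|a_{jj}^I|\lambda_j^{IR})$. Since $\{a_{jj}^R\}^{+}\le|a_{jj}^R|$ and $\{-a_{jj}^I\}^{+}\le|a_{jj}^I|$, this gives $T15(j)\le T17(j)$, and the analogous $T16(j)\le T18(j)$. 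Hence the hypotheses $T17(j)<0$, $T18(j)<0$ force $T15(j)<0$, $T16(j)<0$ with the stated parameter choice.

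The main obstacle is the gap between the activation classes: Corollary \ref{cor5} is stated for $f_j\in H_1$, while Corollary \ref{cor6} only assumes the weaker $f_j\in H_2$, so one cannot literally feed an $H_2$ function into Corollary \ref{cor5}. To close this gap I would return to the proof of Theorem \ref{th2} and observe that the positivity half of the $H_1$ hypothesis is used in exactly one place: to sharpen the diagonal bound $|a_{jj}^R|\,\partial f_j^R/\partial z^R\le|a_{jj}^R|\lambda_j^{RR}$ into $a_{jj}^R\,\partial f_j^R/\partial z^R\le\{a_{jj}^R\}^{+}\lambda_j^{RR}$ (and its imaginary-part analogue). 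For $f_j\in H_2$ the derivative may change sign, so this sharpening is unavailable and one must keep the absolute-value bound; rerunning the identical $\{\xi,2\}$-norm energy estimate with $\{a_{jj}^R\}^{+}$, $\{-a_{jj}^I\}^{+}$ replaced throughout by $|a_{jj}^R|$, $|a_{jj}^I|$ yields precisely $T17(j)$, $T18(j)$. Once this is in place the conclusion is immediate: the strict inequalities let me pick a small $\epsilon>0$ keeping the corresponding $\epsilon$-weighted (hence $e^{2\epsilon\tau_{kj}}$-laden) inequalities negative, whence the differential inequality of Theorem \ref{th2} gives $\|\dot{Z}(t)\|_{\{\xi,2\}}=O(e^{-\epsilon t})$; then $\int_0^{\infty}\|\dot{Z}(t)\|_{\{\xi,2\}}\,dt<\infty$ supplies, by the direct approach of \cite{C2001}, the existence and uniqueness of the equilibrium, and $\|Z(t)-(\overline{Z}^R{}^T,\overline{Z}^I{}^T)^T\|_{\{\xi,2\}}=O(e^{-\epsilon t})$ gives the asserted exponential convergence.
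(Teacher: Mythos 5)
Your proposal is correct and follows essentially the same route the paper intends: Corollary \ref{cor6} is obtained by specializing Corollary \ref{cor5} (equivalently Theorem \ref{th2}) with all auxiliary parameters $\pi1_{jk},\dots,\omega4_{jk}$ set to $1$, absorbing the diagonal terms into the full sums via $\{a_{jj}^R\}^{+}\le|a_{jj}^R|$, $\{-a_{jj}^I\}^{+}\le|a_{jj}^I|$, and then recovering a small $\epsilon>0$ from the strict inequalities. You are in fact more careful than the paper, which silently passes from the $H_1$ hypothesis of Corollary \ref{cor5} to the weaker $H_2$ hypothesis here; your observation that the positivity of the partial derivatives is used only to sharpen the diagonal bound, and that replacing $\{\cdot\}^{+}$ by $|\cdot|$ lets the same energy estimate run under $H_2$, is exactly the missing justification.
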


\begin{remark}
As for how to use the norms $||\cdot||_{\{\xi,1\}}$ and $||\cdot||_{\{\xi,2\}}$ to discuss the time-varying delayed networks, readers can refer to the papers \cite{CR2004,CW2007a}.
\end{remark}

\section{Numerical example}\label{numerical}
In this section, some numerical simulations are presented to show the effectiveness of our obtained results.

Consider a two-neuron complex-valued recurrent neural network described as follows:
\begin{align}\label{equ1}
\left\{
\begin{array}{ll}
\dot{z}_1(t)=&-dz_1(t)+a_{11}f_1(z_1(t))+a_{12}f_2(z_2(t))\\
&+b_{11}g_1(z_1(t-1))+b_{12}g_2(z_2(t-2))+u_1\\
\dot{z}_2(t)=&-dz_2(t)+a_{21}f_1(z_1(t))+a_{22}f_2(z_2(t))\\
&+b_{21}g_1(z_1(t-3))+b_{22}g_2(z_2(t-4))+u_2
\end{array}
\right.
\end{align}
where $z_k=z^R_k+iz^I_k, k=1,2$, $D=\mathrm{diag}(d,d)=19I_2$, and
\begin{align*}
A=(a_{jk})_{2\times 2}=\left(\begin{array}{cc}-2-3i&3-i\\4-2i&-1+2i\end{array}\right),\\
B=(b_{jk})_{2\times 2}=\left(\begin{array}{cc}-1+2i&2+i\\3-4i&-3+2i\end{array}\right),\\ u=(u_1,u_2)^T=(-3+i,2+4i)^T,\\
f_k(z_k)=\frac{1-\mathrm{exp}(-2z_k^R-z_k^I)}{1+\mathrm{exp}(-2z_k^R-z_k^I)}+i\frac{1}{1+\mathrm{exp}(-z_k^R-2z_k^I)},\\
g_k(z_k)=\frac{1}{1+\mathrm{exp}(-z_k^R-2z_k^I)}+i\frac{1-\mathrm{exp}(-2z_k^R-z_k^I)}{1+\mathrm{exp}(-2z_k^R-z_k^I)}.
\end{align*}

From simple calculations, we have, for $j=1,2$,
\begin{align*}
&0<\frac{\partial{f}_j^R}{\partial{z}^R_j}\le 1=\lambda_{j}^{RR};&0<\frac{\partial{f}_j^R}{\partial{z}^I_j}\le 0.5=\lambda_{j}^{RI};\\
&0<\frac{\partial{f}_j^I}{\partial{z}^R_j}\le 0.25=\lambda_{j}^{IR};&0<\frac{\partial{f}_j^I}{\partial{z}^I_j}\le 0.5=\lambda_{j}^{II};
\end{align*}
therefore, $f_j(z)$ belongs to class $H_1(1, 0.5, 0.25, 0.5), j=1,2$. Similarly, we can prove that $g_j(z)$ belongs to class $H_2(0.25, 0.5, 1, 0.5)$, $j=1,2$.

From the notations defined in (\ref{notg}), we have $\overline{D}=19I_4$,
\begin{align*}
\overline{A}=\left(\begin{array}{cccc}
2&3&3&1\\
4&1&2&2\\
3&1&2&3\\
2&2&4&1
\end{array}\right),
\overline{F}=\left(\begin{array}{cccc}
1&0&0.5&0\\
0&1&0&0.5\\
0.25&0&0.5&0\\
0&0.25&0&0.5
\end{array}\right),\\
\overline{B}=\left(\begin{array}{cccc}
1&2&2&1\\
3&3&4&2\\
2&1&1&2\\
4&2&3&3
\end{array}\right),
\overline{G}=\left(\begin{array}{cccc}
0.25&0&0.5&0\\
0&0.25&0&0.5\\
1&0&0.5&0\\
0&1&0&0.5\end{array}\right).
\end{align*}

Calculations show that eigenvalues of $\overline{D}-\overline{A}\overline{F}-\overline{B}\overline{G}$
are: $-0.7655, 18.6670, 20.9701, 19.8784$, so it is not an M-matrix, which means that Theorem \ref{matrix} is not satisfied.
However, according to Corollary \ref{cor1} and Remark \ref{key}, we have
$P_1=\mathrm{diag}\{2,1\}, P_2=\mathrm{diag}\{0,2\}, P_3=\mathrm{diag}\{3,0\}$, and
\begin{align*}
\overline{\Delta}=\mathrm{diag}\{P_1+0.25P_2,0.5(P_1+P_3)\},
\end{align*}
then eigenvalues of
$\overline{D}-\overline{A}\overline{F}-\overline{B}\overline{G}+\overline{\Delta}$ are $0.8488, 20.0717, 22.7947, 21.5348$, therefore Corollary \ref{cor1} holds, so the above system can achieve its equilibrium exponentially.

The following simulations present the correctness of our claim. We choose five cases for initial values. Case 1: $z_1(t)=-4+3i, z_2(t)=-5-i, t\in [-4, 0]$; Case 2: $z_1(t)=2+i, z_2(t)=-3+2.5i, t\in [-4, 0]$; Case 3: $z_1(t)=3-5i, z_2(t)=6+3i, t\in [-4, 0]$; Case 4: $z_1(t)=-2-4i, z_2(t)=-7+4i, t\in [-4, 0]$; Case 5: $z_1(t)=1+4i, z_2(t)=-5-1.5i, t\in [-4, 0]$. Figures \ref{a}-\ref{d} depict the trajectories of $z_1^R(t), z_1^I(t), z_2^R(t), z_2^I(t)$ respectively. For different initial values, they converge to the same equilibrium $(-0.0351, 0.1423, 0.0912, 0.2239)^T$, i.e., the unique equilibrium has the global exponential stability property.

\begin{figure}
\begin{center}
\includegraphics[width=0.5\textwidth,height=0.27\textheight]{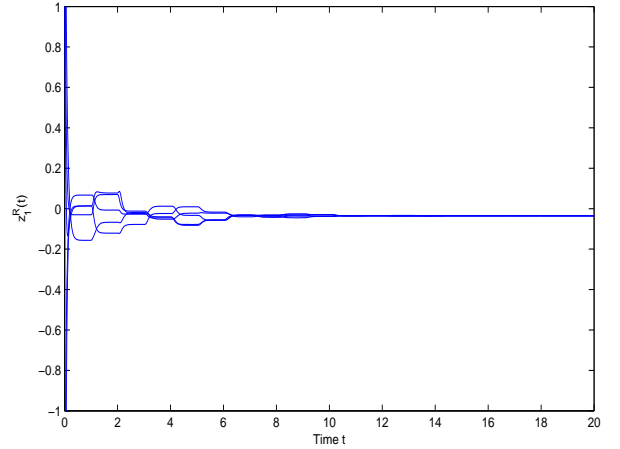}
\end{center}
\caption{Trajectories of $z_1^R(t)$ for different initial values, which show the global exponential stability of equilibrium} \label{a}
\end{figure}
\begin{figure}
\begin{center}
\includegraphics[width=0.5\textwidth,height=0.27\textheight]{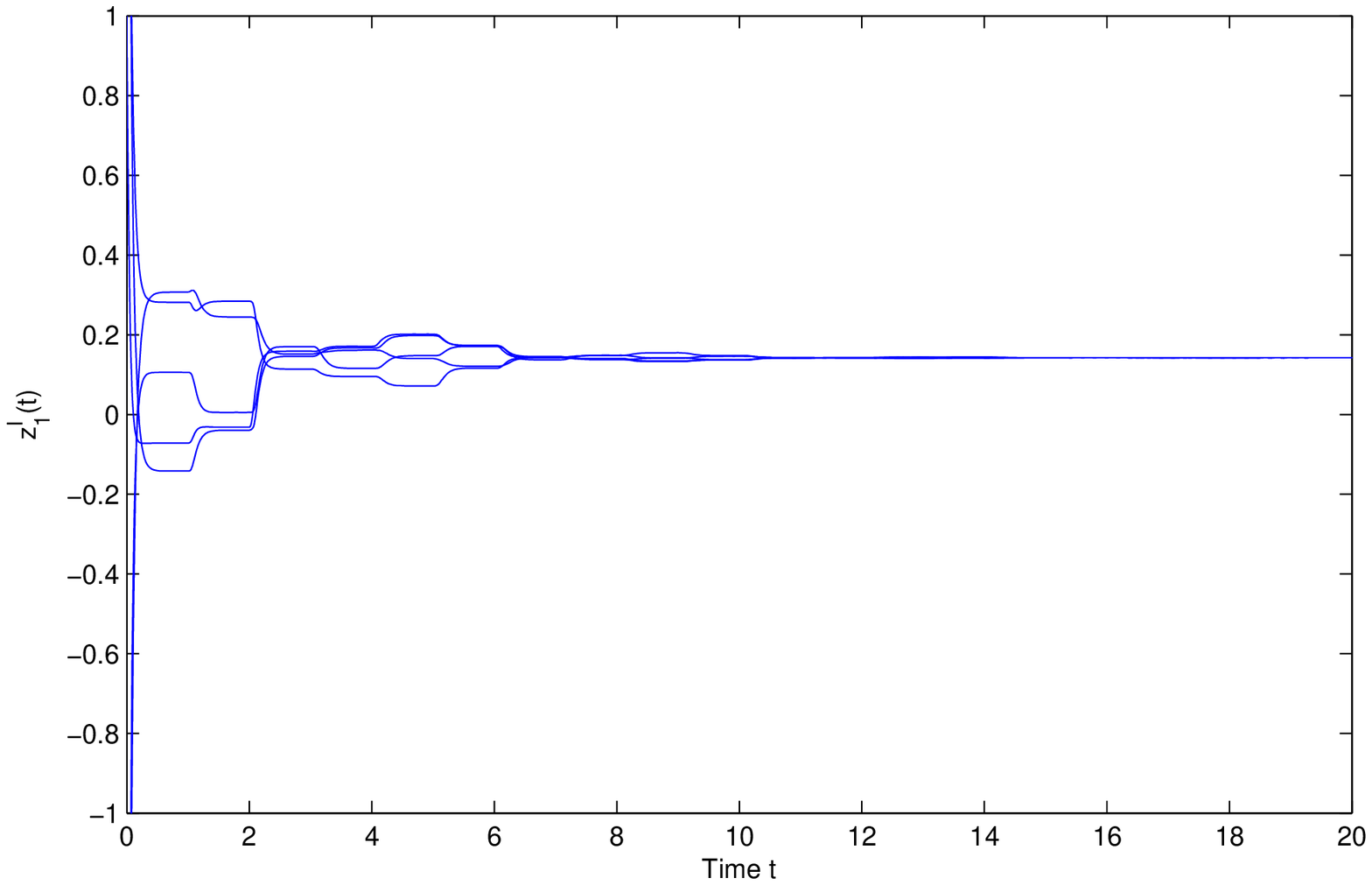}
\end{center}
\caption{Trajectories of $z_1^I(t)$ for different initial values, which show the global exponential stability of equilibrium} \label{b}
\end{figure}
\begin{figure}
\begin{center}
\includegraphics[width=0.5\textwidth,height=0.27\textheight]{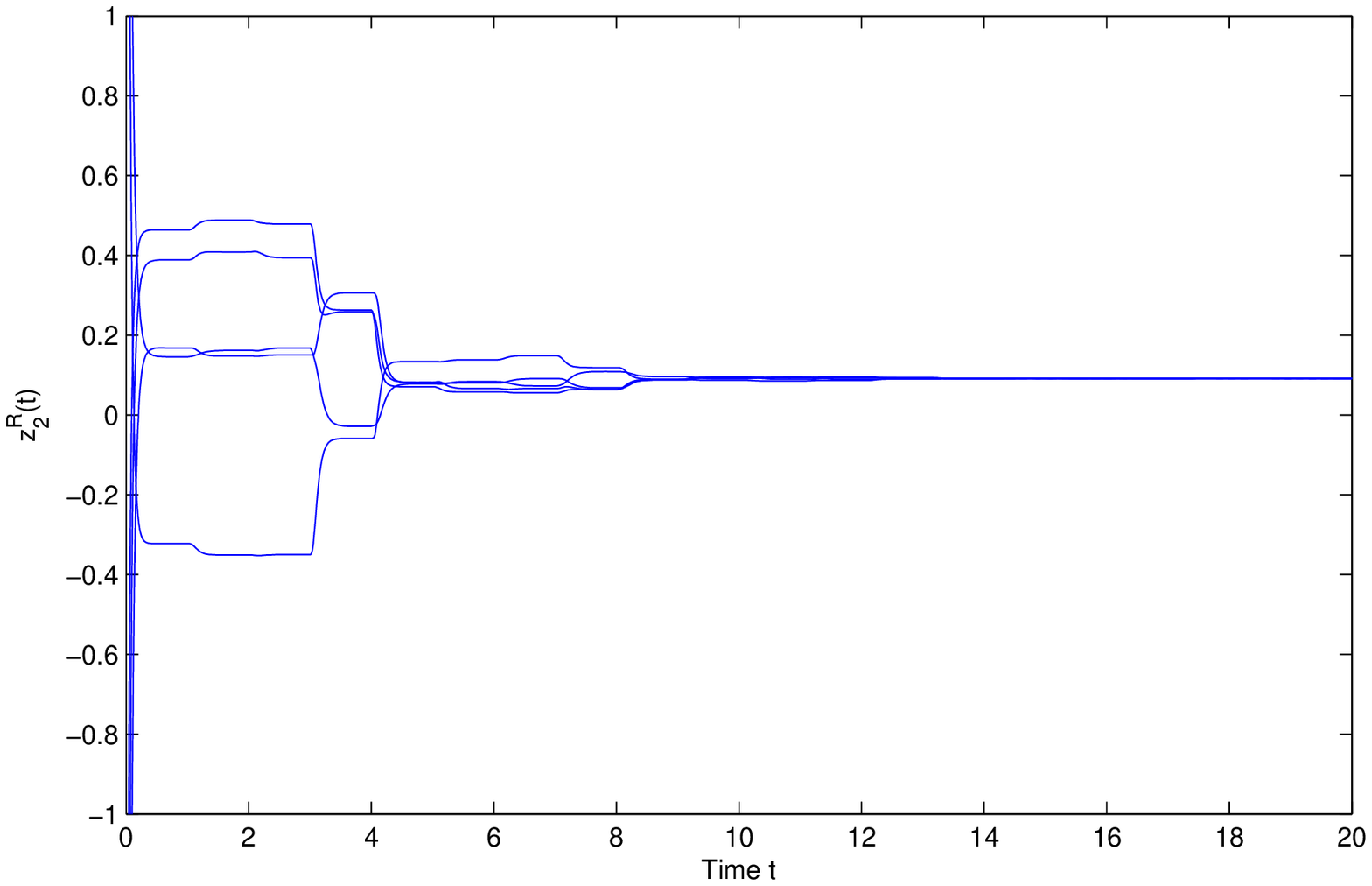}
\end{center}
\caption{Trajectories of $z_2^R(t)$ for different initial values, which show the global exponential stability of equilibrium} \label{c}
\end{figure}
\begin{figure}
\begin{center}
\includegraphics[width=0.5\textwidth,height=0.27\textheight]{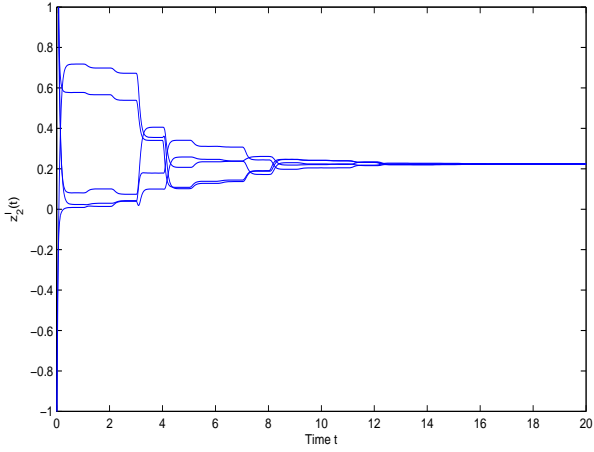}
\end{center}
\caption{Trajectories of $z_2^I(t)$ for different initial values, which show the global exponential stability of equilibrium} \label{d}
\end{figure}

Moreover, if we choose the initial values as Case 1, and only the external control $u$ are different, i.e., different external controls $(u_1, u_2)^T=(-3+i,2+4i)^T$ and $(u_1^{\prime}, u_2^{\prime})^T=(3+2i, 4-i)^T$ are added on the CVNNs, figure \ref{compare} shows that the equilibriums are different, therefore, the equilibrium is heavily impacted by the external control.

\begin{figure}
\begin{center}
\includegraphics[width=0.5\textwidth,height=0.35\textheight]{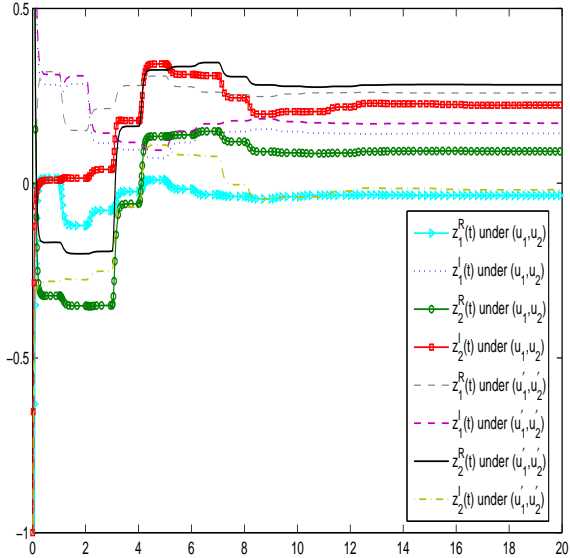}
\end{center}
\caption{Trajectories of all $z_i^R(t)$ and $z_i^I(t), i=1,2$ under different external controllers $(u_1, u_2)$ and $(u_1^{\prime}, u_2^{\prime})$, which means that the equilibrium is impacted by external control } \label{compare}
\end{figure}

In the final simulation, we will consider the time-varying delays, thus we choose the equation as
\begin{align}\label{equ2}
\left\{
\begin{array}{ll}
\dot{z}_1(t)=&-dz_1(t)+a_{11}f_1(z_1(t))+a_{12}f_2(z_2(t))\\
&+b_{11}g_1(z_1(t-1-\sin(t)))\\
&+b_{12}g_2(z_2(t-2-\cos(t)))+u_1\\
\dot{z}_2(t)=&-dz_2(t)+a_{21}f_1(z_1(t))+a_{22}f_2(z_2(t))\\
&+b_{21}g_1(z_1(t-3+\sin(t)))\\
&+b_{22}g_2(z_2(t-4+\cos(t)))+u_2
\end{array}
\right.
\end{align}

All the parameters, including the external control, are the same as defined in the above simulations. Similarly, according to Corollary \ref{cor1}, Remark \ref{ws} and Remark \ref{key}, this system can achieve its equilibrium exponentially. Figures \ref{av}-\ref{dv} depict the trajectories of $z_1^R(t), z_1^I(t), z_2^R(t), z_2^I(t)$ respectively. Moreover, the equilibrium is also $(-0.0351, 0.1423, 0.0912, 0.2239)^T$, i.e., the equilibriums are the same for system (\ref{equ1}) and system (\ref{equ2}) even though they have different time delays.

\begin{figure}
\begin{center}
\includegraphics[width=0.5\textwidth,height=0.27\textheight]{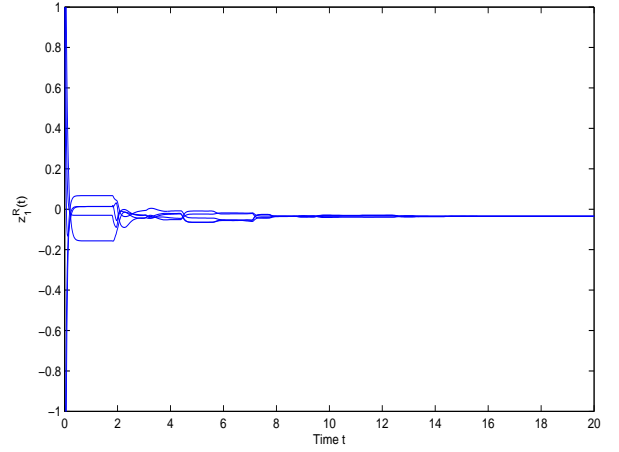}
\end{center}
\caption{Trajectories of $z_1^R(t)$ for different initial values, which show the global exponential stability of equilibrium} \label{av}
\end{figure}
\begin{figure}
\begin{center}
\includegraphics[width=0.5\textwidth,height=0.27\textheight]{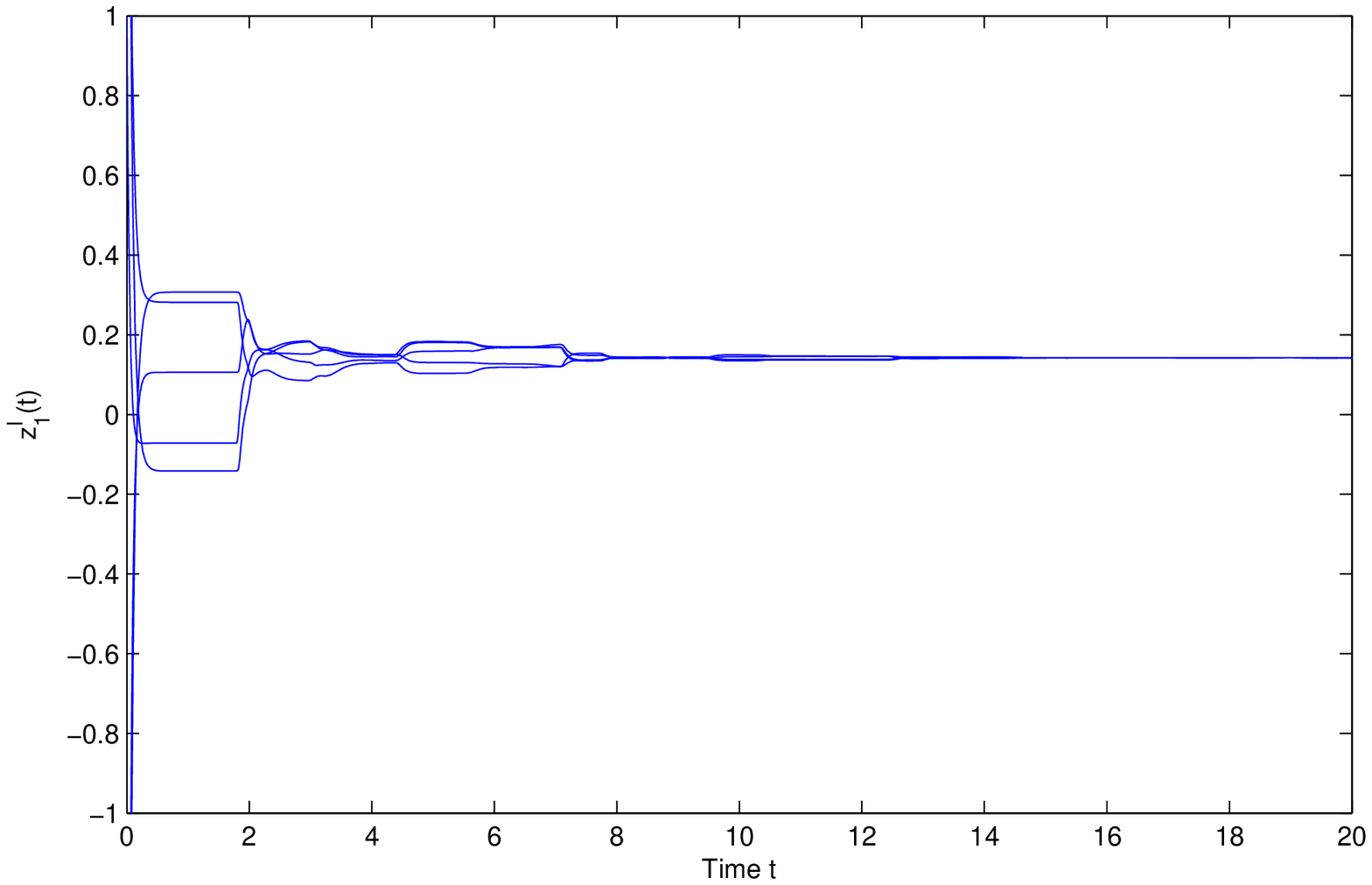}
\end{center}
\caption{Trajectories of $z_1^I(t)$ for different initial values, which show the global exponential stability of equilibrium} \label{bv}
\end{figure}
\begin{figure}
\begin{center}
\includegraphics[width=0.5\textwidth,height=0.27\textheight]{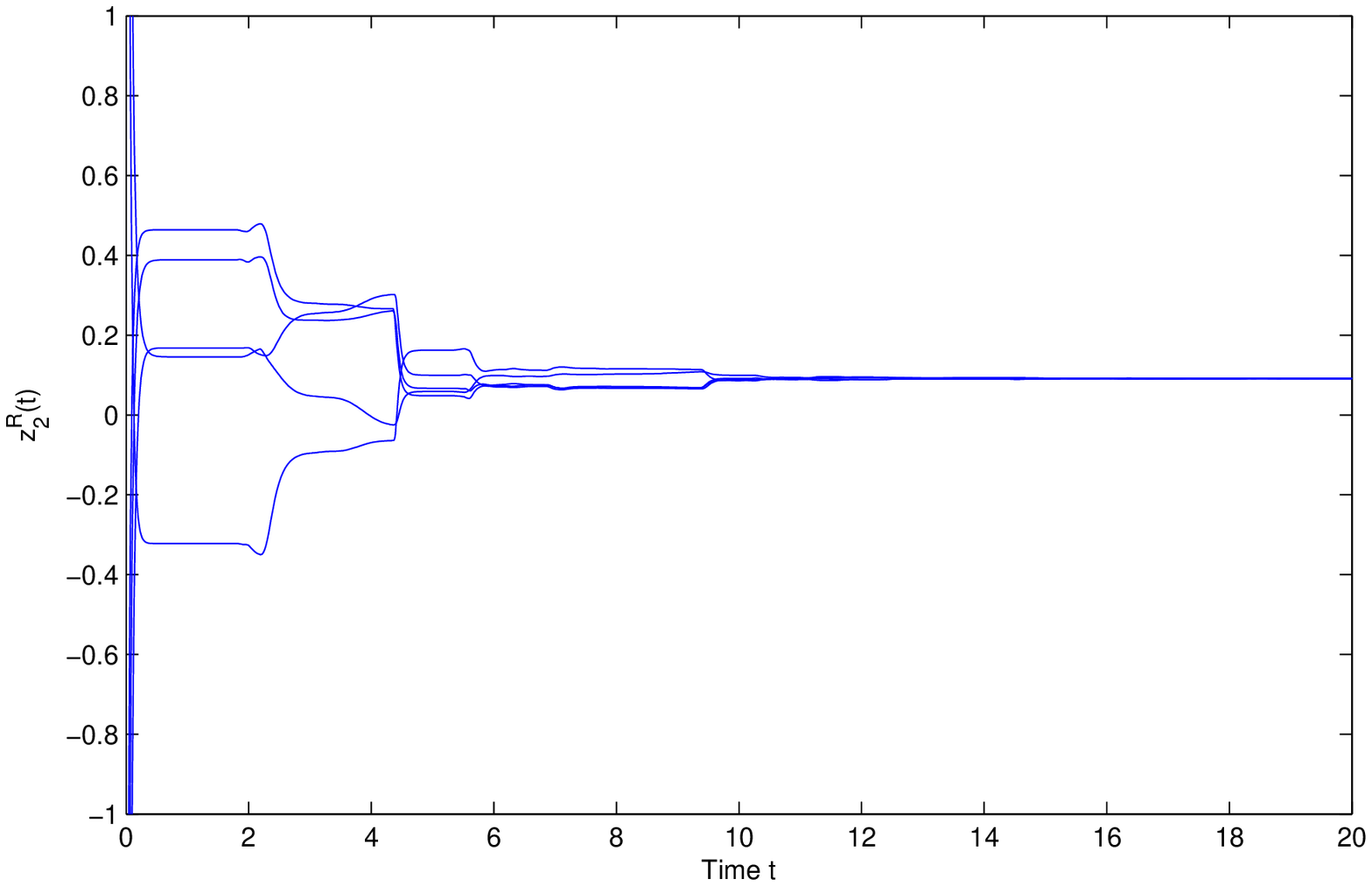}
\end{center}
\caption{Trajectories of $z_2^R(t)$ for different initial values, which show the global exponential stability of equilibrium} \label{cv}
\end{figure}
\begin{figure}
\begin{center}
\includegraphics[width=0.5\textwidth,height=0.27\textheight]{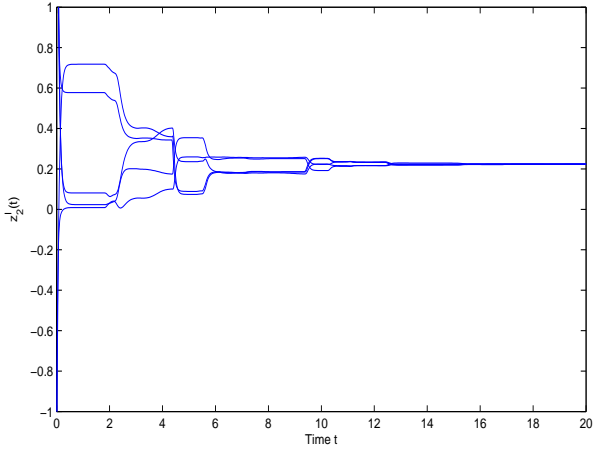}
\end{center}
\caption{Trajectories of $z_2^I(t)$ for different initial values, which show the global exponential stability of equilibrium} \label{dv}
\end{figure}

\section{Conclusion and discussions}\label{conc}
In this paper, we first propose a complex-valued recurrent neural network model with asynchronous time delays. This feature is the first difference of this paper with previous works. Then under the assumptions of activation functions, we prove the exponential convergence directly by using the $\infty$-norm, $1$-norm and $2$-norm respectively, the existence and uniqueness of the equilibrium point is a direct consequence of the exponential convergence; while previous works in the literature always use two proving steps: step 1, prove the existence of equilibrium; step 2, prove the stability. This is also a novelty of this paper for investigating the equilibrium of CVNNs. Moreover, considering the signs of coupling matrix, some sufficient conditions for the uniqueness and global exponential
stability of the equilibrium point are presented, which are more general and less restrictive than previous works, i.e., the $M$-matrix property of $\overline{D}-\overline{A}\overline{F}-\overline{B}\overline{G}$ is just a special case of criteria for exponential stability. These are our main theoretical results. Finally, three numerical examples are given to show the correctness of our obtained results.

In the end, we give some discussions about future directions of the complex-valued neural networks:
\begin{enumerate}
  \item This paper deals with complex-valued neural network by decomposing it to real and imaginary parts and constructing an equivalent real-valued system. To ensure this decomposition, we assume the partial derivatives of activation functions exist and bounded, see Definition \ref{impro}. How to find an efficient way to analyze complex-valued system using the complex nature of system and consider its properties on complex planes will be our future direction.
  \item In this paper, we consider the asynchronous time delays, which can be regarded as discrete delays. However, a distribution of propagation delays can exist for neural networks due to the multitude of parallel pathways with a variety of axon sizes and lengths. Therefore, continuously distributed delays can be a good choice, so investigation of stability under distributed delays will also be our future direction.
  \item As for the dynamical behaviors of complex-valued neural networks, the global existence and exponential stability is just an aspect, there are also many other interesting dynamical behaviors for future research, for example, the multistability, the robustness of uncertain neural networks, the existence and stability of periodic (or almost periodic) solutions, chaotic behaviors for (delayed) complex-valued neural networks, etc.
\end{enumerate}

\section*{Appendix A: The proof of Theorem \ref{thinfty}}
\begin{proof}
Define
\begin{align}\label{new}
x_j(t)=e^{\epsilon t}\dot{z}_j^R(t),~~~~y_j(t)=e^{\epsilon t}\dot{z}_j^I(t), j=1,2,\cdots,n.
\end{align}
Then we have
\begin{align}\label{f1}
&\dot{x}_j(t)
=(-d_j+\epsilon)x_j(t)\nonumber\\
&+\sum_{k=1}^na_{jk}^R
\bigg[\frac{\partial{f}_k^R}{\partial{z}_k^R} x_k+\frac{\partial{f}_k^R}{\partial{z}_k^I} y_k\bigg]-\sum_{k=1}^na_{jk}^I\bigg[\frac{\partial{f}_k^I}{\partial{z}_k^R} x_k+\frac{\partial{f}_k^I}{\partial{z}_k^I} y_k\bigg]\nonumber\\
&+\sum_{k=1}^nb_{jk}^Re^{\epsilon \tau_{jk}} \bigg[\frac{\partial{g}_k^R}{\partial{z}_k^R(\underline{\tau_{jk}})} x_k(\underline{\tau_{jk}})+\frac{\partial{g}_k^R}{\partial{z}_k^I(\underline{\tau_{jk}})} y_k(\underline{\tau_{jk}})\bigg]\nonumber\\
&-\sum_{k=1}^nb_{jk}^Ie^{\epsilon \tau_{jk}}\bigg[\frac{\partial{g}_k^I}{\partial{z}_k^R(\underline{\tau_{jk}})} x_k(\underline{\tau_{jk}})+\frac{\partial{g}_k^I}{\partial{z}_k^I(\underline{\tau_{jk}})} y_k(\underline{\tau_{jk}})\bigg],
\end{align}
and
\begin{align}\label{f2}
&\dot{y}_j(t)=(-d_j+\epsilon)y_j(t)\nonumber\\
&+\sum_{k=1}^na_{jk}^R
\bigg[\frac{\partial{f}_k^I}{\partial{z}_k^R} x_k+\frac{\partial{f}_k^I}{\partial{z}_k^I} y_k\bigg]
+\sum_{k=1}^na_{jk}^I\bigg[\frac{\partial{f}_k^R}{\partial{z}_k^R} x_k+\frac{\partial{f}_k^R}{\partial{z}_k^I} y_k\bigg]\nonumber\\
&+\sum_{k=1}^nb_{jk}^R e^{\epsilon \tau_{jk}}\bigg[\frac{\partial{g}_k^I}{\partial{z}_k^R(\underline{\tau_{jk}})} x_k(\underline{\tau_{jk}})+\frac{\partial{g}_k^I}{\partial{z}_k^I(\underline{\tau_{jk}})} y_k(\underline{\tau_{jk}})\bigg]\nonumber\\
&+\sum_{k=1}^nb_{jk}^Ie^{\epsilon \tau_{jk}}\bigg[\frac{\partial{g}_k^R}{\partial{z}_k^R(\underline{\tau_{jk}})} x_k(\underline{\tau_{jk}})+\frac{\partial{g}_k^R}{\partial{z}_k^I(\underline{\tau_{jk}})} y_k(\underline{\tau_{jk}})\bigg],
\end{align}
where ${\partial{f}_k^{a}}/{\partial{z}_k^b}$ denotes ${\partial{f}_k^{a}(z^R_k(t),z^I_k(t))}/{\partial{z}_k^{b}}, a,b=R, I$;
$x_k(\underline{\tau_{jk}})=x_k(t-\tau_{jk})$, and $y_k(\underline{\tau_{jk}})=y_k(t-\tau_{jk})$; while ${\partial{f}_k^{a}}/{\partial{z}_k^b(\underline{\tau_{jk}})}$ denotes ${\partial{f}_k^{a}(z^R_k(t-\tau_{jk}),z^I_k(t-\tau_{jk}))}/{\partial{z}_k^{b}(t-\tau_{jk})}, a,b=R,I$.

Let
\begin{align*}
X(t)=(x_1(t),\cdots,x_n(t),y_1(t),\cdots,y_n(t))^T\in R^{2n\times 1},
\end{align*}
so $X(t)=e^{\epsilon t}\dot{Z}(t)$, and $\|X(t)\|_{\{\xi,\infty\}}
=\max\{\max_{j}\{|\xi_{j}^{-1}x_{j}(t)|\}, \max_{j}\{|\phi_{j}^{-1}y_{j}(t)|\}\}$.

{\bf Case 1}: For $X(t)$, if $j_0=j_0(t)$, which depends on $t$, is such an index that $|\xi_{j_0}^{-1}x_{j_0}(t)|=\|X(t)\|_{\{\xi,\infty\}}$, then
\begin{align*}
&\xi_{j_0}\frac{d\|X(t)\|_{\{\xi,\infty\}}}{dt}=\frac{d|x_{j_0}(t)|}{dt}\\
=&\mathrm{sign}\{x_{j_0}(t)\}\bigg\{\xi_{j_0}(-d_{j_0}+\epsilon)\xi_{j_0}^{-1}x_{j_0}(t)\\
+&\sum_{k=1}^n\xi_{k}a_{j_0k}^R\frac{\partial{f}_k^R}{\partial{z}_k^R} \xi_k^{-1} x_k+\sum_{k=1}^n\phi_ka_{j_0k}^R\frac{\partial{f}_k^R}{\partial{z}_k^I}\phi_{k}^{-1} y_k\\
-&\sum_{k=1}^n\xi_{k}a_{j_0k}^I\frac{\partial{f}_k^I}{\partial{z}_k^R} \xi_k^{-1} x_k-\sum_{k=1}^n\phi_{k}a_{j_0k}^I\frac{\partial{f}_k^I}{\partial{z}_k^I} \phi_{k}^{-1} y_k\\
+&\sum\limits_{k=1}^n\xi_{k}b_{j_0k}^R\frac{\partial{g}_k^R}{\partial{z}_k^R(\underline{\tau_{j_0k}})}\cdot e^{\epsilon \tau_{j_0k}}\xi_k^{-1} x_k(\underline{\tau_{j_0k}})\\
+&\sum\limits_{k=1}^n\phi_{k}b_{j_0k}^R\frac{\partial{g}_k^R}{\partial{z}_k^I(\underline{\tau_{j_0k}})}\cdot e^{\epsilon \tau_{j_0k}}\phi_{k}^{-1}y_k(\underline{\tau_{j_0k}})\\
-&\sum\limits_{k=1}^n\xi_{k}b_{j_0k}^I\frac{\partial{g}_k^I}{\partial{z}_k^R(\underline{\tau_{j_0k}})}\cdot e^{\epsilon \tau_{j_0k}}\xi_k^{-1} x_k(\underline{\tau_{j_0k}})\\
-&\sum\limits_{k=1}^n\phi_{k}b_{j_0k}^I\frac{\partial{g}_k^I}{\partial{z}_k^I(\underline{\tau_{j_0k}})}\cdot e^{\epsilon \tau_{j_0k}}\phi_{k}^{-1}y_k(\underline{\tau_{j_0k}})\bigg\}\\
\le&\xi_{j_0}(-d_{j_0}+\epsilon+\{a_{j_0j_0}^R\}^{+}\lambda_{j_0}^{RR}+\{-a_{j_0j_0}^I\}^{+}\lambda_{j_0}^{IR})|\xi_{j_0}^{-1}x_{j_0}(t)|\\
+&\sum_{k=1,k\ne j_0}^n\xi_{k}|a_{j_0k}^R|\lambda_{k}^{RR}\cdot \|X(t)\|_{\{\xi,\infty\}}\\
+&\sum_{k=1}^n\phi_{k}|a_{j_0k}^R|\lambda_k^{RI}\cdot\|X(t)\|_{\{\xi,\infty\}}\\
+&\sum_{k=1,k\ne j_0}^n\xi_{k}|a_{j_0k}^I|\lambda_k^{IR}\cdot\|X(t)\|_{\{\xi,\infty\}}\\
+&\sum_{k=1}^n\phi_{k}|a_{j_0k}^I|\lambda_k^{II}\cdot\|X(t)\|_{\{\xi,\infty\}}\\
+&\sum\limits_{k=1}^n\xi_{k}|b_{j_0k}^R|\mu_k^{RR}\cdot e^{\epsilon \tau_{j_0k}}\|X(t-\tau_{j_0k})\|_{\{\xi,\infty\}}\\
+&\sum\limits_{k=1}^n\phi_{k}|b_{j_0k}^R|\mu_k^{RI}\cdot e^{\epsilon \tau_{j_0k}}\|X(t-\tau_{j_0k})\|_{\{\xi,\infty\}}\\
+&\sum\limits_{k=1}^n\xi_{k}|b_{j_0k}^I|\mu_k^{IR}\cdot e^{\epsilon \tau_{j_0k}}\|X(t-\tau_{j_0k})\|_{\{\xi,\infty\}}\\
+&\sum\limits_{k=1}^n\phi_{k}|b_{j_0k}^I|\mu_k^{II}\cdot e^{\epsilon \tau_{j_0k}}\|X(t-\tau_{j_0k})\|_{\{\xi,\infty\}}\\
=&\bigg\{\xi_{j_0}\bigg(-d_{j_0}+\epsilon+\{a_{j_0j_0}^R\}^{+}\lambda_{j_0}^{RR}+\{-a_{j_0j_0}^I\}^{+}\lambda_{j_0}^{IR}\bigg)\\
+&\sum_{k=1,k\ne j_0}^n\xi_{k}|a_{j_0k}^R|\lambda_{k}^{RR}+\sum_{k=1}^n\phi_{k}|a_{j_0k}^R|\lambda_k^{RI}
\\
+&\sum_{k=1,k\ne j_0}^n\xi_{k}|a_{j_0k}^I|\lambda_k^{IR}
+\sum_{k=1}^n\phi_{k}|a_{j_0k}^I|\lambda_k^{II}\bigg\}\cdot\|X(t)\|_{\{\xi,\infty\}}\\
+&\bigg\{\sum\limits_{k=1}^n\xi_{k}|b_{j_0k}^R|\mu_k^{RR}+\sum\limits_{k=1}^n\phi_{k}|b_{j_0k}^R|\mu_k^{RI}
+\sum\limits_{k=1}^n\xi_{k}|b_{j_0k}^I|\mu_k^{IR}\\
&~~~+\sum\limits_{k=1}^n\phi_{k}|b_{j_0k}^I|\mu_k^{II}\bigg\} e^{\epsilon \tau_{j_0k}}\|X(t-\tau_{j_0k})\|_{\{\xi,\infty\}}.
\end{align*}

Furthermore, define
\begin{eqnarray}\label{two}
M(t)=\sup\limits_{t-\tau\le s\le t}\|X(s)\|_{\{\xi,\infty\}},
\end{eqnarray}
where $\tau=\max_{jk}\tau_{jk}$. Then $\|X(t)\|_{\{\xi,\infty\}}\le M(t)$, and if $\|X(t)\|_{\{\xi,\infty\}}= M(t)$, we have
\begin{align}\label{three}
\xi_{j_0}\frac{d\|X(t)\|_{\{\xi,\infty\}}}{dt}\le T1(j_0)\cdot M(t)
\le 0.
\end{align}

{\bf Case 2:} For $X(t)$, if $j_0=j_0^{\prime}(t)$, which depends on $t$, is such an index that $|\phi_{j_0^{\prime}}^{-1}y_{j_0^{\prime}}(t)|=\|X(t)\|_{\{\xi,\infty\}}$, then
\begin{align*}
&\phi_{j_0^{\prime}}\frac{d\|X(t)\|_{\{\xi,\infty\}}}{dt}=\frac{d|y_{j_0^{\prime}}(t)|}{dt}\\
=&\mathrm{sign}\{y_{j_0^{\prime}}(t)\}\bigg\{\phi_{j_0^{\prime}}(-d_{j_0^{\prime}}+\epsilon)\phi_{j_0^{\prime}}^{-1}y_{j_0^{\prime}}(t)\\
+&\sum_{k=1}^n\xi_{k}a_{j_0^{\prime}k}^R\frac{\partial{f}_k^I}{\partial{z}_k^R} \xi_k^{-1} x_k+\sum_{k=1}^n\phi_{k}a_{j_0^{\prime}k}^R\frac{\partial{f}_k^I}{\partial{z}_k^I}\phi_{k}^{-1} y_k\\
+&\sum_{k=1}^n\xi_{k}a_{j_0^{\prime}k}^I\frac{\partial{f}_k^R}{\partial{z}_k^R} \xi_k^{-1} x_k+\sum_{k=1}^n\phi_{k}a_{j_0^{\prime}k}^I\frac{\partial{f}_k^R}{\partial{z}_k^I} \phi_{k}^{-1} y_k\\
+&\sum\limits_{k=1}^n\xi_{k}b_{j_0^{\prime}k}^R\frac{\partial{g}_k^I}{\partial{z}_k^R(\underline{\tau_{j_0^{\prime}k}})} e^{\epsilon \tau_{j_0^{\prime}k}}\xi_k^{-1} x_k(\underline{\tau_{j_0^{\prime}k}})\\
+&\sum\limits_{k=1}^n\phi_{k}b_{j_0^{\prime}k}^R\frac{\partial{g}_k^I}{\partial{z}_k^I(\underline{\tau_{j_0^{\prime}k}})} e^{\epsilon \tau_{j_0^{\prime}k}}\phi_{k}^{-1}y_k(\underline{\tau_{j_0^{\prime}k}})\\
+&\sum\limits_{k=1}^n\xi_{k}b_{j_0^{\prime}k}^I\frac{\partial{g}_k^R}{\partial{z}_k^R(\underline{\tau_{j_0^{\prime}k}})} e^{\epsilon \tau_{j_0^{\prime}k}}\xi_k^{-1} x_k(\underline{\tau_{j_0^{\prime}k}})\\
+&\sum\limits_{k=1}^n\phi_{k}b_{j_0^{\prime}k}^I\frac{\partial{g}_k^R}{\partial{z}_k^I(\underline{\tau_{j_0^{\prime}k}})} e^{\epsilon \tau_{j_0^{\prime}k}}\phi_{k}^{-1}y_k(\underline{\tau_{j_0^{\prime}k}})\bigg\}\\
\le&\bigg\{\phi_{j_0^{\prime}}\bigg(-d_{j_0^{\prime}}+\epsilon+\{a_{j_0^{\prime}j_0^{\prime}}^R\}^{+}\lambda_{j_0^{\prime}}^{II}+\{a_{j_0^{\prime}j_0^{\prime}}^I\}^{+}\lambda_{j_0^{\prime}}^{RI}\bigg)
\\+&\sum_{k=1}^n\xi_{k}|a_{j_0^{\prime}k}^R|\lambda_{k}^{IR}+\sum\limits_{k=1,k\ne j_0^{\prime}}^n\phi_{k}|a_{j_0^{\prime}k}^R|\lambda_k^{II}
+\sum_{k=1}^n\xi_{k}|a_{j_0^{\prime}k}^I|\lambda_k^{RR}\\
+&\sum\limits_{k=1, k\ne j_0^{\prime} }^n\phi_{k}|a_{j_0^{\prime}k}^I|\lambda_k^{RI}\bigg\}\|X(t)\|_{\{\xi,\infty\}}\\
+&\bigg\{\sum\limits_{k=1}^n\xi_{k}|b_{j_0^{\prime}k}^R|\mu_k^{IR}+\sum\limits_{k=1}^n\phi_{k}|b_{j_0^{\prime}k}^R|\mu_k^{II}
+\sum\limits_{k=1}^n\xi_{k}|b_{j_0^{\prime}k}^I|\mu_k^{RR}\\
+&\sum\limits_{k=1}^n\phi_{k}|b_{j_0^{\prime}k}^I|\mu_k^{RI}\bigg\}
e^{\epsilon \tau_{j_0^{\prime}k}}\|X(t-\tau_{j_0^{\prime}k})\|_{\{\xi,\infty\}}.
\end{align*}

From the definition of (\ref{two}), we have $\|X(t)\|_{\{\xi,\infty\}}\le M(t)$, and if $\|X(t)\|_{\{\xi,\infty\}}= M(t)$,
\begin{align}\label{four}
\phi_{j_0^{\prime}}\frac{d\|X(t)\|_{\{\xi,\infty\}}}{dt}
\le T2(j_0^{\prime})\cdot M(t)\le 0.
\end{align}

Therefore, for the above two cases, according to (\ref{three}) and (\ref{four}), one can get that $M(t)$ decreases monotonely, which implies $\|X(t)\|_{\{\xi,\infty\}}=O(1)$ and
\begin{eqnarray*}
\|\dot{Z}(t)\|_{\{\xi,\infty\}}=O(e^{-\epsilon t}),
\end{eqnarray*}
i.e., $\dot{z}_j^R(t)=O(e^{-\epsilon t})$ and $\dot{z}_j^I(t)=O(e^{-\epsilon t}), j=1,2,\cdots,n$.

Consequently, for any $t_1, t_2\in R, t_1>t_2$, there exists a constant $C>0$, such that
\begin{align*}
&\|Z(t_1)-Z(t_2)\|_{\{\xi,\infty\}}=\|\int_{t_2}^{t_1}\dot{Z}(t)dt\|\le \int_{t_2}^{t_1}\|\dot{Z}(t)\|dt\\
\le &\int_{t_2}^{t_1}Ce^{-\epsilon t}dt=\frac{C}{\epsilon}(e^{-\epsilon t_2}-e^{-\epsilon t_1})\le \frac{C}{\epsilon}e^{-\epsilon t_2}.
\end{align*}

By Cauchy convergence principle, we conclude that
$\lim\limits_{t\rightarrow +\infty}Z(t)=\overline{Z}$, for some
$\overline{Z}=({\overline{Z}^R}^T, {\overline{Z}^I}^T)^T$. It is easy to
get that $\overline{Z}$ is an equilibrium point of the systems
(\ref{realf}) and (\ref{imagf}).

Next, we prove that the equilibrium point is unique. Let
$\overline{Z}$ be any equilibrium point of the systems (\ref{realf}) and
(\ref{imagf}). By the same arguments, we can prove that
\begin{align*}
&\|Z(t)-\overline{Z}\|_{\{\xi,\infty\}}=\|\int_{t}^{\infty}\dot{Z}(t)dt\|\le
\frac{C}{\epsilon}e^{-\epsilon t}.
\end{align*}
which means that any solution $Z(t)$ converges to $\overline{Z}$
exponentially and the equilibrium point is unique.
\end{proof}

\section*{Appendix B: Proof of Theorem \ref{th1}}

\begin{proof}
Recall the definition of $x_j(t)$ and $y_j(t)$ defined in (\ref{new}), we can define a Lyapunov function as
\begin{align*}
L_1(t)=&\sum_{j=1}^n\xi_j|x_j(t)|+\sum_{j=1}^n\phi_{j}|y_j(t)|\\
&+\sum\limits_{j,k=1}^n\alpha_{jk}e^{\epsilon \tau_{jk}}\int_{t-\tau_{jk}}^t|x_k(s)|ds\\
&+\sum\limits_{j,k=1}^n\beta_{jk}e^{\epsilon \tau_{jk}}\int_{t-\tau_{jk}}^t|y_k(s)|ds,
\end{align*}
where
\begin{align*}
\alpha_{jk}&=\xi_j(|b_{jk}^R|\mu_k^{RR}+|b_{jk}^I|\mu_k^{IR})+\phi_{j}(|b_{jk}^R|\mu_k^{IR}+|b_{jk}^I|\mu_k^{RR});\\
\beta_{jk}&=\xi_j(|b_{jk}^R|\mu_k^{RI}+|b_{jk}^I|\mu_k^{II})+\phi_{j}(|b_{jk}^R|\mu_k^{II}+|b_{jk}^I|\mu_k^{RI}).
\end{align*}

Differentiating $L_1(t)$ along equations (\ref{f1}) and (\ref{f2}), using some calculations (the details are left to interested readers), we have
\begin{align*}
\dot{L}_1(t)\le\sum\limits_{k=1}^nT7(k)\cdot|x_k(t)|+\sum\limits_{k=1}^nT8(k)\cdot|y_k(t)|\le 0.
\end{align*}

By similar arguments used in the proof of Theorem \ref{thinfty}, it is easy to see that the equilibrium point is unique.
\end{proof}

\section*{Appendix C: Proof of Theorem \ref{th2}}
\begin{proof}
Recall the definition of $x_j(t)$ and $y_j(t)$ defined in (\ref{new}), we can define a Lyapunov function as
\begin{align*}
L_2(t)=&\sum\limits_{j=1}^n\xi_jx_j^2(t)+\sum\limits_{j=1}^n\phi_{j}y_j^2(t)\\
&+\sum\limits_{j,k=1}^n\alpha^{\prime}_{jk}e^{2\epsilon \tau_{jk}}\int_{t-\tau_{jk}}^t x_k^2(s)ds\\
&+\sum\limits_{j,k=1}^n\beta^{\prime}_{jk}e^{2\epsilon \tau_{jk}}\int_{t-\tau_{jk}}^t y_k^2(s)ds,
\end{align*}
where
\begin{align*}
\alpha^{\prime}_{jk}=&\sum_{k=1}^n\xi_j(|b_{jk}^R|\mu_k^{RR}+|b_{jk}^I|\mu_k^{IR}) \pi3_{jk}^{-1}\\
&+\sum_{k=1}^n\phi_j(|b_{jk}^R|\mu_{k}^{IR}+|b_{jk}^I|\mu_k^{RR})\omega3^{-1}_{jk};\\
\beta^{\prime}_{jk}=&\sum_{k=1}^n\xi_j(|b_{jk}^R|\mu_k^{RI}+|b_{jk}^I|\mu_k^{II})\pi4^{-1}_{jk} \\&+\sum_{k=1}^n\phi_j(|b_{jk}^R|\mu_k^{II}+|b_{jk}^I|\mu_k^{RI}) \omega4^{-1}_{jk}.
\end{align*}

Differentiating $L_2(t)$ along equations (\ref{f1}) and (\ref{f2}), using some calculations (the details are left to interested readers), one can get that
\begin{align*}
\dot{L}_2(t)\le\sum\limits_{j=1}^nT13(j)x_j^2(t)+\sum\limits_{j=1}^nT14(j)y^2_j(t)\le 0.
\end{align*}

By similar arguments used in the proof of Theorem \ref{thinfty}, it is easy to see that the equilibrium point is unique.
\end{proof}

\end{document}